\documentclass[a4paper,12pt]{elsarticle}
\usepackage{amssymb}
\usepackage{epsfig}
\usepackage{amsfonts}
\usepackage{amsmath}
\usepackage{euscript}
\usepackage{amscd}
\usepackage{amsthm}
\DeclareMathAlphabet{\mathpzc}{OT1}{pzc}{m}{it}

\newtheorem{thm}{Theorem}[section]
\newtheorem{lem}[thm]{Lemma}
\newtheorem{prop}[thm]{Proposition}

\newtheorem{cor}[thm]{Corollary}

\newdefinition{defn}[thm]{Definition}
\newdefinition{ex}[thm]{Example}
\newdefinition{rem}[thm]{Remark}
\newdefinition{note}{Note}

\newcommand\onto {- \hspace{-.300cm} - \hspace{-.300cm} \twoheadrightarrow}
\newcommand\m {\mathfrak{m}}
\newcommand\p {\mathfrak{p}}

\begin{document}
\begin{frontmatter}
\title{On codimension-one $\mathbb A^1$-fibration with retraction}
\author{Prosenjit Das}
\ead{prosenjit.das@gmail.com}
\author{Amartya K. Dutta}
\ead{amartya@isical.ac.in}
\address{Stat-Math Unit, Indian Statistical Institute, \\
203 B.T. Road, Kolkata 700 108, India}

\begin{abstract}
In this paper we prove some results on the sufficiency of codimension-one fibre conditions for a flat algebra with a retraction to be locally $\mathbb{A}^1$ or at least an $\mathbb{A}^1$-fibration.

\medskip
\noindent
{\tiny Keywords: $\mathbb{A}^1$-fibration; Codimension-one; Retraction; Finite generation; 
Krull domain.} \\ 
{\tiny {\bf AMS Subject classifications (2000)}. Primary 13F20, 14R25; Secondary 13E15, 13B22, 13A30, 13B10}
\end{abstract}
\end{frontmatter}

\section[intro]{Introduction}
Throughout this paper, $R$ will denote a commutative ring with unity and $R^{[n]}$ a polynomial ring in $n$ variables over $R$. Let $A$ be an $R$-algebra. We shall use the notation $A = R^{[n]}$ to mean that $A$ is isomorphic, as an $R$-algebra, to a polynomial ring in $n$ variables over $R$.

\smallskip

For a prime ideal $P$ of $R$, $k(P)$ will denote the residue field $R_P/ P R_P$ and $A_P$ will denote the ring $S^{-1}A$, where $S = R \backslash P$. Thus $A \otimes_R k(P) = A_P/P A_P$.

\medskip

A finitely generated flat $R$-algebra $A$ is said to be an \textit{$\mathbb{A}^1$-fibration} over $R$ if $A \otimes_R k(P) = k(P)^{[1]}$ for all $P \in Spec \ R$.

\medskip

A \textit{retraction} $\Phi$ from an $R$-algebra $A$ to $R$ is a ring homomorphism $\Phi : A \longrightarrow R$ such that $\Phi|_R = 1_R$, i.e., it is an $R$-algebra homomorphism from $A$ to $R$. If a retraction exists, $R$ is said to be a \textit{retract} of $A$.

\medskip

Let $k$ be a field and $\bar{k}$ denote the algebraic closure of $k$. A $k$-algebra $B$ is said to be \textit{geometrically integral over $k$} if $B \otimes_k \bar{k}$ is an integral domain, and an \textit{$\mathbb{A}^1$-form over $k$} if $B \otimes_k \bar{k} = {\bar{k}}^{[1]}$.

\bigskip

The following result on $\mathbb{A}^1$-fibration was proved in (\cite{D_MOR}, 3.4, 3.5):

\begin{thm} \label{Thm_D_A1}
 Let $R$ be a Noetherian domain with quotient field $K$ and $A$ a faithfully flat finitely generated $R$-algebra such that $A \otimes_R K = K^{[1]}$ and $A \otimes_R k(P)$ is geometrically integral over $k(P)$ for each height one prime ideal $P$ of $R$. Under these hypotheses, we have the following results:
\begin{enumerate}
\item[\rm(i)] If $R$ is normal, then $A \cong Sym_R ( I )$ for an invertible ideal $I$ of $R$.
\item[\rm(ii)] If $R$ contains $\mathbb{Q}$, then $A$ is an $\mathbb{A}^1$-fibration over $R$.
\end{enumerate}
\end{thm}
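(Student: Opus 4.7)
The plan is to reduce both parts to the height-one situation and then bootstrap: for (i) by Krull-style patching using normality, and for (ii) by promoting codimension-one information to all fibres with the help of the characteristic-zero hypothesis.

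For part (i), I would first localize at an arbitrary height-one prime $P$ of $R$, so that $R_P$ is a DVR with fraction field $K$ and residue field $k(P)$. The algebra $A_P$ is faithfully flat and finitely generated over $R_P$, with generic fibre $K^{[1]}$ and geometrically integral closed fibre. The first key step is to show that these hypotheses force $A_P = R_P^{[1]}$; I would obtain this from a Hamann/Sathaye-type structure theorem for flat finitely generated algebras over a DVR, lifting a suitably chosen element from the closed fibre to a variable in $A_P$. Once this is established at every height-one $P$, the second step is the global patching: since $R$ is normal Noetherian, $R = \bigcap_{\mathrm{ht}(P)=1}R_P$, and one assembles the local trivializations into a presentation $A \cong \mathrm{Sym}_R(I)$ for a rank-one reflexive $R$-module $I$. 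Finite generation of $A$ together with pointwise freeness of $I$ at every height-one prime then upgrades $I$ to an invertible ideal.

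For part (ii), the goal is to establish $A \otimes_R k(P) = k(P)^{[1]}$ at every prime $P$, not only at height-one ones. I would reduce to the case where $(R,\mathfrak{m})$ is a local Noetherian domain containing $\mathbb{Q}$ and verify the condition at $P = \mathfrak{m}$. Passing to the normalization $R'$ of $R$ in $K$ and applying (i) realizes $A \otimes_R R'$ as the symmetric algebra of an invertible $R'$-ideal; in particular $A \otimes_R R'$ is locally $R'^{[1]}$ at every prime of $R'$ above $\mathfrak{m}$. To descend this trivialization to $R$ itself, the characteristic-zero hypothesis is brought in through Kambayashi's theorem (every $\mathbb{A}^1$-form over a field of characteristic zero is trivial), together with a Seidenberg-type lemma guaranteeing that suitable derivations extend across the integral inclusion $R \hookrightarrow R'$. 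This produces a variable in $A \otimes_R k(\mathfrak{m})$ and yields the $\mathbb{A}^1$-fibration conclusion.

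The main obstacle, in both parts, is the DVR step in (i): generic $K^{[1]}$ together with geometric integrality of the closed fibre still leaves a priori room for nontrivial one-dimensional forms of $\mathbb{A}^1$ lurking in the closed fibre, and ruling these out requires a careful use of flatness and finite generation over the DVR. For (ii) the indispensable point is the characteristic-zero descent from $R'$ to $R$, since nontrivial $\mathbb{A}^1$-forms genuinely exist in positive characteristic and would otherwise block the argument.
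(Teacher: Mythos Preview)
This theorem is not proved in the paper; it is quoted from \cite{D_MOR}, 3.4--3.5, as background motivating the paper's own results (Theorem~\ref{Th2}, Proposition~\ref{Th3}, Theorem~\ref{Th4}), which replace the hypothesis ``$A$ finitely generated'' by ``$A$ has a retraction with finitely generated kernel''. There is therefore no proof here to compare your proposal against directly.

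That said, your outline is broadly consonant with the strategy the paper uses for its analogues. For (i), Theorem~\ref{Th2} follows exactly your scheme: localize at a height-one prime (a DVR), establish $A_P = R_P^{[1]}$ there via a Russell--Sathaye type argument (Corollary~\ref{Cor1_Th1}), and then patch globally using Lemma~\ref{DO_L2}. For (ii), Proposition~\ref{Th3} and Theorem~\ref{Th4} similarly reduce to a local ring, pass to a normalization, and invoke triviality of separable $\mathbb{A}^1$-forms to descend.

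One genuine point to watch in your plan for (ii): you pass directly to the normalization $R'$ of the local ring and invoke (i), but (i) as stated requires $R'$ to be Noetherian, and the normalization of an arbitrary Noetherian local domain need not be Noetherian or module-finite over $R$. The paper, in proving the analogous Proposition~\ref{Th3}(I), handles this by first passing to the completion $\widehat{R}$ (a complete local ring, hence Nagata, so its normalization is finite) before normalizing; your sketch would need either this maneuver or an extension of (i) to Krull domains to close the gap.
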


A striking feature of this result is that conditions on merely the generic and codimension-one fibres imply that all fibres are $\mathbb{A}^1$. Analogous results were proved for subalgebras of polynomial algebras (\cite{BD_A1FIBSUBALG}, 3.10, 3.12) without the hypothesis ``$A$ is finitely generated over $R$''. In this paper we investigate whether the condition ``$A$ is finitely generated'' in Theorem \ref{Thm_D_A1} can be replaced by a weaker hypothesis like ``$A$ is Noetherian'' when the $R$ algebra $A$ is known to have a retraction to $R$. Recently, in \cite{LOC_A1_CODIM1}, Bhatwadekar-Dutta-Onoda have shown the following:

\begin{thm} \label{BDO}
Let $R$ be a Noetherian normal domain with field of fractions $K$ and $A$ a Noetherian flat $R$-algebra such that $A_P = {R_P}^{[1]}$ for each prime ideal $P$ of $R$ of height one. Suppose that there
exists a retraction $\Phi : A \onto R$. Then $A \cong Sym_R ( I )$ for an invertible ideal $I$ in $R$.
\end{thm}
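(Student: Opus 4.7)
Let $J := \ker\Phi$, so that the retraction yields an $R$-module decomposition $A = R \oplus J$; in particular $R \hookrightarrow A$ is faithfully flat and $A$ is $R$-torsion-free. Since $A$ is Noetherian, $J$ is a finitely generated ideal, and the conormal $R$-module $I := J/J^2$ is finitely generated. The plan is to show that $I$ is invertible as an $R$-module and that there is an $R$-algebra isomorphism $\mathrm{Sym}_R(I) \simeq A$; since any rank-one projective module over a domain is isomorphic to an invertible fractional ideal, and one may clear denominators over the normal Noetherian domain $R$ to make it integral, this will establish the theorem.

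For each height-one prime $P$ of $R$, the hypothesis $A_P = R_P^{[1]}$ combined with the induced retraction $\Phi_P$ forces $A_P = R_P[\xi]$ with $J_P = \xi A_P$ for some $R_P$-transcendental $\xi$: start from any polynomial generator $\xi'$ of $A_P$ and set $\xi := \xi' - \Phi_P(\xi')$. Consequently $I_P \cong R_P\overline{\xi}$ is free of rank one, each successive quotient $J_P^n/J_P^{n+1}$ is $R_P$-free of rank one, the canonical map $\mathrm{Sym}_{R_P}(I_P) \to \mathrm{gr}_{J_P}(A_P)$ is an isomorphism, and the $J_P$-adic filtration of $A_P$ is split, so $A_P \cong \mathrm{Sym}_{R_P}(I_P)$. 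Passing to the generic point further gives $A \otimes_R K = K^{[1]}$ and $I \otimes_R K = K$, so $I$ is torsion-free of generic rank one.

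The crux of the proof is to promote this codimension-one rank-one-freeness of $I$ to global invertibility over the Noetherian normal (hence Krull) domain $R$. I would realize the reflexive hull of $I$ as the divisorial fractional ideal $\bigcap_{\mathrm{ht}(P)=1} I_P$ of $R$, and use the retraction $\Phi$ to show that the associated divisor class is actually Cartier: at each height-one $P$, the rank-one submodule $R_P\overline{\xi} \subset A_P$ is canonically attached to $\Phi_P$ (well defined up to a unit of $R_P$), so these local generators patch consistently across $\mathrm{Spec}\,R$ into an invertible $R$-module, which one then identifies with $I$ itself. This is exactly the point where the retraction hypothesis is indispensable and replaces the finite-generation hypothesis used in Theorem~\ref{Thm_D_A1}(i); it will be the main obstacle.

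Once $I$ is known to be invertible, projectivity yields an $R$-linear section $s: I \to J$ of the surjection $J \twoheadrightarrow I$, which by the universal property of the symmetric algebra extends to an $R$-algebra homomorphism $\Psi: \mathrm{Sym}_R(I) \to A$ compatible with the two retractions to $R$. By the height-one analysis, $\Psi_P$ is an isomorphism at each height-one prime $P$; a reflexive-module descent along $R = \bigcap_{\mathrm{ht}(P)=1} R_P$, applied graded-component by graded-component to the natural grading that $\mathrm{Sym}_R(I)$ carries and that the $J$-adic filtration transfers to $A$, then forces $\Psi$ to be an isomorphism globally, completing the proof.
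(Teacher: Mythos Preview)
Your approach is structurally different from the paper's, and the step you yourself flag as the ``main obstacle'' is a genuine gap that your sketch does not close.

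\medskip

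\textbf{The gap.} Knowing that $I=J/J^2$ is free of rank one at every height-one prime of the normal domain $R$ and at the generic point gives you only a \emph{divisorial} (reflexive, Weil) class, not an invertible (Cartier) one; on a normal domain that is not locally factorial these differ. Your sentence ``the rank-one submodule $R_P\overline\xi$ \dots\ so these local generators patch consistently \dots\ into an invertible $R$-module'' is precisely the unjustified passage from Weil to Cartier: well-definedness up to a unit of $R_P$ at each height-one $P$ is exactly the datum of a Weil divisor and no more. Nothing in the retraction alone singles out a Cartier representative. A second, related issue: even if $I$ were invertible, an arbitrary $R$-linear section $s:I\to J$ lifts $\overline\xi$ to some $\xi+\xi^2 g$, and $R_P[\xi+\xi^2 g]\subsetneq R_P[\xi]$ in general, so your $\Psi$ need not be surjective; the ``height-one analysis'' shows $A_P\cong\mathrm{Sym}_{R_P}(I_P)$ abstractly, not that this particular $\Psi_P$ realizes the isomorphism. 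Finally, the $J$-adic \emph{filtration} does not endow $A$ with a grading, so the phrase ``applied graded-component by graded-component'' has no meaning on the $A$-side until you have already proved $A\cong\mathrm{gr}_J A$.

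\medskip

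\textbf{What the paper does instead.} The paper sidesteps all of this via a patching argument. From finite generation of $\ker\Phi$ one first produces (Lemma~\ref{L3}) a single $t\in R$ and $F\in\ker\Phi$ with $A[1/t]=R[1/t][F]$. Over the normal domain $R$ the associated primes of $tR$ all have height one, so with $S$ the set of non-zerodivisors on $R/tR$ the hypothesis gives $(S^{-1}A)_{\mathfrak m}=(S^{-1}R)_{\mathfrak m}^{[1]}$ for every maximal ideal of the semilocal ring $S^{-1}R$, hence $S^{-1}A=(S^{-1}R)^{[1]}$. The patching lemma (Lemma~\ref{DO_L2}) then glues the two polynomial presentations on $\mathrm{Spec}\,R[1/t]$ and $\mathrm{Spec}\,S^{-1}R$ directly into $A\cong\mathrm{Sym}_R(I)$ for an invertible $I$; invertibility of $I$ comes out of the gluing, rather than having to be established beforehand as in your plan. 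This is what makes the paper's route short: it never needs to analyse $J/J^2$ or prove anything is Cartier.
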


\medskip

The above theorem occurs in \cite{LOC_A1_CODIM1} as a consequence of a general structure theorem for any faithfully flat algebra over a Noetherian normal domain $R$ which is locally $\mathbb{A}^1$ in codimension-one. The statements and proofs in \cite{LOC_A1_CODIM1} are quite technical. In this paper, we will first prove (see Theorem \ref{Th2}) an analogue of Theorem \ref{Thm_D_A1} (i). Our approach, which is more in the spirit of the proof in (\cite{D_MOR}, 3.4), will provide a short and direct proof of Theorem \ref{BDO}. Next we will prove the following analogue of Theorem \ref{Thm_D_A1} (ii) (see Theorem \ref{Th4}):
\bigskip

\noindent 
{\bf Theorem A.}
{\it
Let $ \mathbb{Q} \hookrightarrow R$ be a Noetherian domain with quotient field $K$ and $A$ be a Noetherian flat $R$-algebra with a retraction $\Phi : A \onto R$ such that

\begin{enumerate}
 \item [\rm(1)]$A \otimes_R K = K^{[1]}$.
 \item [\rm(2)]$A \otimes_R k(P)$ is an integral domain for each height one prime ideal $P$ of $R$.
\end{enumerate}

Then $A$ is an $\mathbb{A}^1$-fibration over $R$. Thus, if $R$ is seminormal, then $A \cong Sym_R (I)$ for some invertible ideal $I$ of $R$.
}

\medskip

As an intermediate step, we shall prove the following result (see Proposition \ref{Th3}) which gives a generalisation of Theorem \ref{BDO} over an arbitrary Noetherian domain:

\medskip

\noindent 
{\bf Proposition A.}
{\it
Let $R$ be a Noetherian domain with quotient field $K$ and $A$ be a Noetherian flat $R$-algebra with a retraction $\Phi : A \onto R$ such that

\begin{enumerate}
 \item [\rm (1)]$A \otimes_R K = K^{[1]}$.
 \item [\rm (2)]$A \otimes_R k(P)$ is geometrically integral over $k(P)$ for each height one prime ideal $P$ of $R$.
\end{enumerate}

Then $A$ is finitely generated over $R$ and there exists a finite birational extension $R'$ of $R$ and an invertible ideal $I$ of $R'$ such that $A \otimes_R R' \cong Sym_{R'} ( I )$.
}

\medskip

In fact, in our results, the hypothesis ``$A$ is Noetherian'' can be replaced by ``$Ker \ \Phi$ is finitely generated''.

\section[RS]{A version of Russell-Sathaye criterion for an algebra to be a polynomial algebra}

In this section we present a version of Russell-Sathaye criterion (\cite{RS_FIND}, 2.3.1) for an algebra to be a polynomial algebra. Our version is an extension of the version given by Dutta-Onoda (\cite{DO_CODIM1}, 2.4) and suitable for algebras which are known to have retractions to the base ring. For convenience, we first record a few preliminary results. The first two results are easy.

\begin{lem} \label{L0a}
Let $B \subset A$ be integral domains. 
Suppose that there exists a non-zero element $p$ in $B$ such that $B[1/p] = A[1/p]$ and $pA \cap B = pB$. 
Then $B = A$. 
\end{lem}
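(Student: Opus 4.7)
The plan is to pick an arbitrary element $a \in A$ and show $a \in B$ by a simple descent argument on the $p$-power needed to bring $a$ into $B$. Since $B[1/p] = A[1/p]$ and $a \in A \subset A[1/p] = B[1/p]$, there exists a nonnegative integer $n$ such that $p^n a \in B$. I will induct on the least such $n$.

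For the base case $n=0$, we have $a \in B$ directly. For the inductive step with $n \geq 1$, the element $p^n a$ lies in $B$ by assumption, and it also lies in $pA$ since $p^n a = p \cdot (p^{n-1} a)$ with $p^{n-1} a \in A$. Therefore $p^n a \in pA \cap B = pB$, so we may write $p^n a = p b$ for some $b \in B$. Now I invoke the assumption that $A$ is a domain (so cancellation by the nonzero element $p$ is valid) to conclude $p^{n-1} a = b \in B$. By the inductive hypothesis, $a \in B$.

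This argument is really a single descent trick — each application of the hypothesis $pA \cap B = pB$ combined with cancellation in the domain $A$ reduces the $p$-exponent by one — so there is no genuine obstacle; the only thing to be careful about is ensuring that $p^{n-1}a \in A$ (which is automatic since $A$ is a ring containing $B$ and $p$) so that $p^n a$ genuinely lies in $pA$. The hypothesis that $B \subset A$ are domains is used precisely to allow the cancellation step $p \cdot (p^{n-1}a) = p \cdot b \implies p^{n-1}a = b$.
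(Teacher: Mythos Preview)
Your argument is correct and is the standard descent proof of this elementary fact. The paper does not actually supply a proof of this lemma; it simply records it as one of two preliminary results that ``are easy,'' so there is nothing further to compare.
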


\begin{lem} \label{L0b}
 Let $C$ be a $D$-algebra such that $D$ is a retract of $C$. Then the following hold:

\begin{enumerate}
 \item [\rm (I)]$pC \cap D = pD$ for all $p \in D$.
 \item [\rm (II)]If $D \subset C$ are domains, then $D$ is algebraically closed in $C$. 
\end{enumerate}
\end{lem}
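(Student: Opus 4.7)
The plan is to exploit the defining feature of a retraction: $\Phi$ fixes every element of $D$ and hence fixes every coefficient of any polynomial in $D[T]$, so one can apply $\Phi$ freely to identities inside $C$ that involve $D$-coefficients. For (I), the inclusion $pD \subseteq pC \cap D$ is tautological. For the reverse, I would take $x \in pC \cap D$ and write $x = pc$ with $c \in C$; applying $\Phi$ and using $\Phi(x) = x$, $\Phi(p) = p$ gives $x = p\,\Phi(c) \in pD$. That is essentially the whole argument for this part.

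For (II), the strategy is a minimal-polynomial argument. Given $c \in C$ algebraic over $D$, I would choose a nonzero $f(T) = a_n T^n + \cdots + a_0 \in D[T]$ of minimal positive degree $n$ with $f(c) = 0$. Applying $\Phi$ to the relation $f(c) = 0$ yields $f(\Phi(c)) = 0$, so $b := \Phi(c) \in D$ is itself a root of $f$. Because $T - b$ is monic, division with remainder inside $D[T]$ produces a factorisation $f(T) = (T - b)\,g(T)$ with $g \in D[T]$ nonzero of degree $n - 1$. Substituting $T = c$ gives $(c - b)\,g(c) = 0$ in the domain $C$, so either $c = b \in D$, which is the desired conclusion, or $g(c) = 0$. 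The latter is impossible: if $n - 1 \geq 1$ it contradicts the minimality of $n$, and if $n = 1$ then $g$ is a nonzero constant in $D$, which cannot vanish.

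I do not foresee a serious obstacle; both items are formal consequences of the existence of $\Phi$. The only subtleties worth flagging are that the domain hypothesis in (II) is used precisely at the cancellation step $(c - b)\,g(c) = 0$, and that the minimal-degree choice of $f$ is exactly what rules out the alternative $g(c) = 0$.
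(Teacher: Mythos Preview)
Your argument is correct in both parts. The paper itself does not supply a proof of this lemma, merely noting that it is ``easy''; your write-up is precisely the standard verification one would expect, so there is nothing to compare beyond observing that your proposal fills in the omitted details.
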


\begin{lem}\label{L1}
Let $R$ be a ring and $A$ be an $R$-algebra with a generating set $S= \{x_i: i \in \Lambda \}$ where $\Lambda$ is some indexing set. Suppose that there is a retraction $\Phi : A \onto R$. Then $Ker \ \Phi = (\{ x_i-r_i: i \in \Lambda \})A$ where $r_i = \Phi(x_i)$ for each $i \in \Lambda$.
\end{lem}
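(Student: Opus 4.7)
The proof is a routine but clean argument built on a single observation: quotienting $A$ by the ideal $J := (\{x_i - r_i : i \in \Lambda\})A$ collapses every generator $x_i$ onto its image $r_i \in R$, forcing $A/J$ to coincide with (the image of) $R$.

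The plan is as follows. First, I would verify the easy inclusion $J \subseteq \ker \Phi$: since $\Phi(x_i - r_i) = \Phi(x_i) - \Phi(r_i) = r_i - r_i = 0$ (using $\Phi|_R = 1_R$), every generator of $J$ lies in $\ker \Phi$, so the whole ideal does.

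For the reverse inclusion I would argue indirectly, by showing that the composite $\iota : R \hookrightarrow A \twoheadrightarrow A/J$ is an isomorphism. On the one hand, $J \subseteq \ker \Phi$ means $\Phi$ factors as $\bar{\Phi} : A/J \to R$, and $\bar{\Phi} \circ \iota = \Phi|_R = 1_R$, so $\iota$ is injective. On the other hand, $A$ is generated as an $R$-algebra by $\{x_i\}_{i \in \Lambda}$, hence $A/J$ is generated as an $R$-algebra by the classes $\overline{x_i}$; but in $A/J$ we have $\overline{x_i} = \overline{r_i} \in \iota(R)$, so $A/J = \iota(R)$, giving surjectivity. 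Hence $\iota$ is an isomorphism and in particular $\bar{\Phi}$ is an isomorphism too, so $\ker \bar{\Phi} = 0$, i.e.\ $\ker \Phi \subseteq J$.

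There is no substantive obstacle: the argument works uniformly for arbitrary (possibly infinite) generating sets $\Lambda$, because the equality $A/J = \iota(R)$ is verified element-by-element on monomials in the $x_i$ after reducing modulo $J$. The only point to keep in mind is the systematic use of $\Phi|_R = 1_R$, which is exactly what makes the retraction hypothesis do the work.
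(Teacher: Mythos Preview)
Your proof is correct and is essentially the same argument as the paper's, just packaged via the quotient $A/J$ rather than via the direct-sum decomposition $A = R \oplus J$ that the paper uses. In both cases the key point is identical: modulo $J$ each generator $x_i$ becomes $r_i \in R$, so $R$ surjects onto $A/J$; the paper phrases this as $A = R + J$ (whence $A = R \oplus J = R \oplus \ker\Phi$ forces $J = \ker\Phi$), while you phrase it as $\iota : R \to A/J$ being an isomorphism.
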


\begin{proof}
Let $\widetilde{S}=\{ x_i-r_i: i \in \Lambda \}$ and $I$ be the ideal of $A$ generated by $\widetilde{S}$. Note that $R[S] = R[\widetilde{S}]$. It is easy to see that $A = R \oplus Ker \ \Phi = R \oplus I$. Since $I \subseteq Ker \ \Phi$, it follows that $Ker \ \Phi = I$.
\end{proof}

\begin{lem} \label{L2}
Let $R \subset A$ be integral domains and $\Phi: A \onto R$ be a retraction with finitely generated kernel. Suppose that there exists an element $p$ which is a non-zero non-unit in $R$ such that $A[1/p] = R[1/p]^{[1]}$. Then there exists $x \in Ker \ \Phi$ such that $x \notin p A$ and $A[1/p] = R[1/p][x]$.
\end{lem}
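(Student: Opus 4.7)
The plan is to produce $x$ in the form $p^{N_0}Y$, where $Y$ is a suitably chosen generator of $A[1/p]$ over $R[1/p]$ lying in the extended kernel, and $N_0 \in \mathbb{Z}$ is the least integer for which $p^{N_0}Y$ already lies in $A$.

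First, I would extend $\Phi$ to a retraction $\Phi' : A[1/p] \onto R[1/p]$ by localising at $\{1,p,p^2,\ldots\}$; note that $Ker \ \Phi' = (Ker \ \Phi) \cdot A[1/p]$. By hypothesis $A[1/p] = R[1/p][Y_0]$ for some $Y_0$; replacing $Y_0$ by $Y := Y_0 - \Phi'(Y_0)$, we may assume $A[1/p] = R[1/p][Y]$ with $Y \in Ker \ \Phi'$, so that $Ker \ \Phi' = Y \cdot A[1/p]$. The set $S := \{N \in \mathbb{Z} : p^N Y \in A\}$ is non-empty (since $Y \in A[1/p]$) and upward closed. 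Provided $S$ has a least element $N_0$, set $x := p^{N_0}Y \in A$. Then $\Phi(x) = p^{N_0}\Phi'(Y) = 0$, so $x \in Ker \ \Phi$; also $R[1/p][x] = R[1/p][Y] = A[1/p]$ because $p^{N_0}$ is a unit in $R[1/p]$; and $x \notin pA$, because $x = py$ with $y \in A$ would give $y = p^{N_0-1}Y \in A$, contradicting the minimality of $N_0$.

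The main obstacle, and what I expect to be the delicate step, is to show that $S$ does possess a least element. Suppose to the contrary that $S = \mathbb{Z}$; then $Y_k := Y/p^k \in A$ for every $k \geq 0$, and each $Y_k$ lies in $Ker \ \Phi$ (since $\Phi(Y_k) = \Phi'(Y_k) = 0$). The ideals $Y_k A$ form an ascending chain because $Y_k = p Y_{k+1} \in Y_{k+1}A$. I would then observe that $Ker \ \Phi = \bigcup_{k \geq 0} Y_k A$: any $z \in Ker \ \Phi \subseteq Ker \ \Phi' = Y \cdot A[1/p]$ may be written as $z = Yh$ with $h \in A[1/p]$, and clearing denominators gives $h = a/p^k$ for some $a \in A$ and $k \geq 0$, whence $z = Y_k a \in Y_k A$. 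Finite generation of $Ker \ \Phi$ then forces this ascending chain to stabilise, giving $Ker \ \Phi = Y_K A$ for some $K$. But then $Y_{K+1} \in Ker \ \Phi$ yields $Y_{K+1} = Y_K b$ for some $b \in A$, while $Y_K = p Y_{K+1}$, so $Y_K (1 - pb) = 0$; since $A$ is a domain and $Y_K \neq 0$ (as $Y$ is transcendental over $R[1/p]$), this forces $pb = 1$, i.e.\ $p$ is a unit in $A$. Applying $\Phi$ would then yield $p \cdot \Phi(b) = 1$ in $R$, contradicting the hypothesis that $p$ is a non-unit in $R$. Hence $S$ must admit a least element, and the construction above delivers the required $x$.
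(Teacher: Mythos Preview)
Your argument is correct and follows essentially the same route as the paper: assume the desired $x$ fails to exist, produce an infinite descending sequence $x_0=px_1=p^2x_2=\cdots$ of generators lying in $Ker\,\Phi$, use finite generation of $Ker\,\Phi$ to force $Ker\,\Phi=x_NA$ for some $N$, and derive the unit contradiction from $x_{N+1}\in x_NA$ together with $x_N=px_{N+1}$. The only notable difference is cosmetic: where the paper expands each generator $a_i$ of $Ker\,\Phi$ as a polynomial in $x_0$ over $R[1/p]$ and clears denominators coefficient-by-coefficient to land in $x_NR[x_N]$, you instead observe directly that $Ker\,\Phi\subseteq Ker\,\Phi'=Y\cdot A[1/p]$ and clear a single denominator, giving the ascending union $Ker\,\Phi=\bigcup_k Y_kA$ which stabilises by finite generation --- a slightly slicker packaging of the same idea.
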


\begin{proof}
Suppose, if possible, that $x \in pA$ for every $x \in Ker \ \Phi$ for which $A[1/p] = R[1/p][x]$.

\medskip

Let $Ker \ \Phi = (a_1, a_2, \dots, a_m)A$. Choose $x_0 \in Ker \ \Phi$ such that $A[1/p] = R[1/p][x_0]$. Note that $\Phi$ extends to a retraction $\Phi_p: A[1/p] \onto R[1/p]$ with kernel $x_0(A[1/p])$. By our assumption, $x_0 = p x_1$ for some $x_1 \in A$. Obviously, $x_1 \in Ker \ \Phi$ and $A[1/p] = R[1/p][x_1]$ and hence $x_1 \in p A$. Arguing in a similar manner, we get $x_2 \in Ker \ \Phi$ such that $x_1 = p x_2$, $A[1/p] = R[1/p][x_2]$ and $x_2 \in p A$. Continuing this process we get a sequence $\{ x_n \}_{n \ge 0}$ such that $x_n \in Ker \ \Phi$, $A[1/p] = R[1/p][x_n]$ and $x_n = p x_{n+1}$. Thus $x_0 = p^n x_n$ for all $n \ge 1$.

\medskip

Note that $(x_0, x_1, \dots, x_n, \dots)A \subseteq (a_1, a_2, \dots, a_m)A$. But since $a_i \in A \subset A[1/p] = R[1/p][x_0]$, there exist $n_i \in \mathbb{N}$ and $\alpha_{ij} \in R[1/p]$ such that $a_i = \underset{j=0} {\overset{n_i} {\sum}} \alpha_{ij} {x_0}^{j}$. Choose $N \in \mathbb{N}$ such that $\alpha_{ij} p^{jN} \in R$ for all $i, j$ and set $\lambda_{ij} := \alpha_{ij} p^{jN}$. Now since $ x_0, a_i \in Ker \ \Phi_p$, we have $\alpha_{i0} =0$ for all $i$ and hence $a_i = \underset{j=1} {\overset{n_i} {\sum}} \alpha_{ij} {x_0}^{j}$. Thus $a_i = \underset{j=1} {\overset{n_i} {\sum}} \lambda_{ij} {x_N}^{j} \in x_N R[x_N] \subseteq x_N A$ for all $i$, $1 \le i \le m$. So, we have $Ker \ \Phi=(a_1, a_2, \dots, a_m)A = x_NA$. Now $x_{N+1} \in Ker \ \Phi = x_NA$, which implies that $x_{N+1} = \alpha x_N$ for some $\alpha \in A$. Since $x_N = p x_{N+1}$, it follows that $\alpha p =1$, which is a contradiction to the fact that $p$ is not a unit in $A$.

\medskip

Thus there exists $x \in Ker \ \Phi$ such that $x \notin p A$ and $A[1/p] = R[1/p][x]$.
\end{proof}

\medskip

We now present a version of Russell-Sathaye criterion 
when there exists a retraction.

\begin{prop} \label{Th1}
Let $R \subset A$ be integral domains such that there exists a retraction $\Phi: A \onto R$. Suppose that there exists a prime $p$ in $R$ such that

\begin{enumerate}
 \item [\rm (1)]$p$ is a prime in $A$.
 \item [\rm (2)]$A[1/p]  = R[1/p]^{[1]}$.
\end{enumerate}

Then $pA \cap R = pR$, $R/pR$ is algebraically closed in $A/pA$ and there exists an increasing chain $A_0 \subseteq A_1 \subseteq A_2 \ ...  \subseteq A_n \subseteq  ... $ of subrings of A and a sequence of elements $\{ x_n \}_{n \ge 0}$ in $Ker \ \Phi$ with $x_0 A \subseteq x_1 A \subseteq \dots \subseteq x_n A \subseteq \dots$ such that

\begin{enumerate}
 \item [\rm (a)]$A_n = R[x_n] = R^{[1]}$ for all $n \ge 0$.
 \item [\rm (b)]$A[1/p]=A_n[1/p]$ for all $n \ge 0$.
 \item [\rm (c)]$pA \cap A_n \subseteq pA_{n+1}$ for all $n \ge 0$.
 \item [\rm (d)]$A= \underset{n \ge 0}{\cup}A_n = R[x_1, x_2, \dots, x_n, \dots]$.
 \item [\rm (e)]$Ker \ \Phi = (x_0, x_1, x_2, \dots, x_n, \dots)A$.
\end{enumerate}

Moreover the following are equivalent:

\begin{enumerate}
 \item [\rm (i)] $Ker \ \Phi$ is finitely generated.
 \item [\rm (ii)] $Ker \ \Phi = x_N A$ for some $N \ge 0$.
 \item [\rm (iii)] $A$ is finitely generated over $R$.
 \item [\rm (iv)] $A = R[x_N]$ for some $N \ge 0$.
 \item [\rm (v)] There exists $x \in Ker \ \Phi \backslash pA $ 
 such that $A = R[x] = R^{[1]}$.
\end{enumerate}

The conditions (i)--(v) will be satisfied if 
$\underset{n \ge 0}{\cap} p^n A = (0)$.
\end{prop}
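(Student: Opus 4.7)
The plan proceeds in three phases. First, the preliminary assertions: $pA\cap R=pR$ is Lemma~\ref{L0b}(I), and for $R/pR$ algebraically closed in $A/pA$ the primality of $p$ in both $A$ and $R$ makes each quotient a domain, $\Phi$ descends to a retraction $\bar\Phi:A/pA\onto R/pR$, and Lemma~\ref{L0b}(II) applies.

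To initialise the chain I would take any $R[1/p]$-generator $y$ of $A[1/p]$, replace it by $y-\Phi_p(y)\in Ker\ \Phi_p$ (where $\Phi_p$ extends $\Phi$), and scale by a suitable power of $p$ so that the result $x_0$ lands in $Ker\ \Phi$ while still generating $A[1/p]$ over $R[1/p]$; set $A_0:=R[x_0]$. The inductive step rests on a dichotomy at stage~$n$. If $pA\cap A_n=pA_n$, Lemma~\ref{L0a} forces $A=A_n$ and I stabilise by setting $x_m:=x_n$ for $m>n$. Otherwise $A_n/(pA\cap A_n)\hookrightarrow A/pA$ is a proper quotient of the polynomial ring $(R/pR)[x_n]$, so $\bar x_n$ is algebraic over $R/pR$ and therefore, by the algebraic-closedness just established, lies in $R/pR$; writing $x_n=r+pa$ and applying $\Phi$ with $x_n\in Ker\ \Phi$ yields $r\in pR$ and hence $x_n\in pA$. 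I then choose $x_{n+1}\in A$ with $x_n=px_{n+1}$; straightforward checks give $x_{n+1}\in Ker\ \Phi$, $R[x_{n+1}]=R^{[1]}$, and $A[1/p]=R[1/p][x_{n+1}]$. The crux is condition~(c): for $a=f(x_n)\in pA\cap A_n$ with $f\in R[T]$, $f(0)=\Phi(a)\in\Phi(pA)\subseteq pR$, so $f(x_n)=f(0)+px_{n+1}g(px_{n+1})\in pA_{n+1}$. Conditions (a) and (b) are transparent from the construction; (d) follows from Lemma~\ref{L0a} applied to $B:=\bigcup_n A_n$, noting that (c) gives $pA\cap B=pB$; and (e) follows from Lemma~\ref{L1} applied to the generating set $\{x_n\}$ of $A$, each element of which lies in $Ker\ \Phi$.

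For the equivalences, $(v)\Rightarrow(iv)\Rightarrow(iii)\Rightarrow(i)$ is routine, and $(i)\Rightarrow(ii)$ follows from the ascending chain $x_0A\subseteq x_1A\subseteq\cdots$ since a finite generating set of $Ker\ \Phi$ must sit inside a single $x_NA$. For $(ii)\Rightarrow(v)$: if the non-trivial case of the dichotomy persisted beyond stage $N$, then $x_{N+1}\in Ker\ \Phi=x_NA$ combined with $x_N=px_{N+1}$ would give $x_N(1-pu)=0$ for some $u\in A$, contradicting that $p$ is a non-unit in the domain $A$; hence stabilisation occurs at some stage $M\ge N$, providing $x:=x_M\in Ker\ \Phi\setminus pA$ with $A=R[x]$. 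The final claim is similar: persistent non-triviality would force $x_0=p^n x_n\in\bigcap_n p^nA=(0)$, contradicting $x_0\ne 0$. The main obstacle throughout is the dichotomy step in the inductive construction: the algebraic-closedness of $R/pR$ in $A/pA$ is precisely what permits the conclusion $x_n\in pA$ in the non-trivial case, and this in turn powers both the chain construction and the termination arguments that yield the equivalences.
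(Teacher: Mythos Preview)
Your proof is correct and follows essentially the same approach as the paper: the same retraction-based preliminaries, the same dichotomy in the inductive step (you frame it as $pA\cap A_n=pA_n$ versus not, the paper as $\bar x_n$ transcendental versus algebraic over $R/pR$, but these are equivalent), the same choice $x_{n+1}:=x_n/p$ in the non-trivial case, and the same verifications of (a)--(e) via Lemmas~\ref{L0a} and~\ref{L1}. The one minor difference is in cycling the equivalences: you obtain (i)$\Rightarrow$(ii) directly from the ascending chain $x_0A\subseteq x_1A\subseteq\cdots$ and then argue (ii)$\Rightarrow$(v) by the contradiction $x_N(1-pu)=0$, whereas the paper invokes Lemma~\ref{L2} for (i)$\Rightarrow$(v); your route is self-contained and arguably cleaner.
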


\begin{proof}
$pA \cap R = pR$ by Lemma \ref{L0b}. Since $\Phi$ induces a 
retraction $ \Phi_p : A/pA \onto R/pR$, $R/pR$ is algebraically closed 
in $A/pA$ by Lemma \ref{L0b}.

\medskip

By condition (2), there exists ${x'}_0 \in A$ such that 
$A[1/p]=R[1/p][{x'}_0]$. Let $x_0 = {x'}_0 - \Phi({x'}_0)$. 
Then $x_0 \in Ker \ \Phi$ and $A[1/p]=R[1/p][x_0]={R[1/p]}^{[1]}$. 
Set $A_0 := R[x_0] (=R^{[1]})$. 
Then $A_0 \subseteq A$ and $A[1/p]=A_0[1/p]=R[1/p][x_0]$.

Now suppose that we have obtained elements 
$x_0, x_1, \dots , x_n \in Ker \ \Phi$ such that setting 
$A_m := R[x_m] (=R^{[1]})$ for all $m$, $0 \le m \le n$, 
we have $A_0 \subseteq A_1 \subseteq A_2 \ \dots \subseteq A_n \subseteq A$ 
and $A_m[1/p] = A[1/p]$; $0 \le m \le n$.

\medskip

We now describe our choice of $x_{n+1}$:

\smallskip
Let $\overline{x_n}$ denote the image of $x_n$ in $A/pA$. 
We consider separately the two possibilities: 
\begin{enumerate}
 \item [\rm (I)]$\overline{x_n}$ is transcendental over $R/pR$.
 \item [\rm (II)]$\overline{x_n}$ is algebraic over $R/pR$.
\end{enumerate}

Case I : $\overline{x_n}$ is transcendental over $R/pR$. 
In this case the map $A_n/pA _n(=R[x_n]/pR[x_n]) \longrightarrow A/pA$ 
is injective, i.e., $pA_n = pA \cap A_n$. 
Since $A_n[1/p] = A[1/p]$, we get $A_n = A$ by Lemma \ref{L0a}. 
Now we set $x_{n+1} := x_n$ and $A_{n+1} := R[x_{n+1}] (=A_n = A)$.

\medskip

Case II: $\overline{x_n}$ is algebraic over $R/pR$. 
Since $R/pR$ is algebraically closed in $A/pA$, 
we see that $\overline{x_n} \in R/pR$. 
Thus $x_n = pu_n + c_n$ for some $u_n \in A$ and $c_n \in R$. 
Applying $\Phi$, we get $0=\Phi(x_n)=p\Phi(u_n) + c_n$ 
showing that $c_n \in pR$ and hence $x_n \in pA$. 
Set $x_{n+1} := x_n/p (\in A)$. Clearly $x_{n+1} \in Ker \ \Phi$. 
Now setting $A_{n+1} := R[x_{n+1}] (=R^{[1]})$, 
we see that $A_0 \subseteq A_1 \subseteq A_2 \ \dots \subseteq A_n 
\subseteq A_{n+1} \subseteq A$ and $A_{n+1}[1/p] = A_n[1/p] = A[1/p]$.

\medskip

Thus we set $x_{n+1} := x_n$ or $x_{n+1} := x_n/p$ depending on 
whether the image of $x_n$ in $A/pA$ is transcendental or 
algebraic over $R/pR$. By construction, conditions (a) and (b) hold. 
We now verify (c).

\medskip

If $x_n = x_{n+1}$, i.e., $A_{n+1} = A_n = A$, then 
$pA \cap A_n = pA = pA_{n+1}$. Now consider the case 
$x_n = px_{n+1} \in pA_{n+1}$. Let $a \in pA \cap A_n$. 
Then $a = r_0 + r_1 (px_{n+1}) + \dots + r_l (px_{n+1})^l$ 
for some $l \ge 0$ and $r_0, r_1, \dots, r_l \in R$. 
Then $r_0 \in pA \cap R = pR \subset pA_{n+1}$. Therefore, $a \in pA_{n+1}$. 
Thus $pA \cap A_n \subseteq A_{n+1}$.

\medskip

We now prove (d). Let $B =\underset{n \ge 0}{\cup} A_n$. 
Obviously, $B \subseteq A$ and $B[1/p]=A[1/p]$. 
Hence, by Lemma \ref{L0a}, it is enough to show that $pA \cap B =pB$.

\smallskip

Clearly, $pB \subseteq pA \cap B$. Now let $y \in pA \cap B$. 
Then there exists $i \ge 0$ such that 
$y \in pA \cap A_i \subseteq pA_{i+1} \subseteq pB$. Thus $pA \cap B =pB$.

\medskip

$(e)$ follows from Lemma \ref{L1}.

\medskip

(i) $\Longrightarrow$ (v) follows from Lemma \ref{L2}. 
Our construction shows that (iii) $\Longrightarrow$ (iv). 
The implications (v) $\Longrightarrow$ 
(iii) and (iv) $\Longrightarrow$ (ii) $\Longrightarrow$ (i) are easy.

\medskip

Note that our construction shows that the sequence $\{x_n\}_{n \ge 0}$ 
is eventually a constant sequence (i.e., there exists $N \ge 0$ such that 
$x_{N+r} = x_N$ for all $r \ge 0$) if and only if there exists 
$N \ge 0$ such that the image of $x_N$ in $A/pA$ is transcendental 
over $R/pR$. It is easy to see that each of the conditions 
(i)--(v) is equivalent to the above condition.

\medskip

If the image of $x_m$ in $A/pA$ is algebraic over $R/pR$ for 
$1 \le m \le n$, then $x_n = p^n x_0 \in p^n A$. 
Therefore, if $\underset{n \ge 0}{\cap} p^n A = (0)$, 
then the sequence $\{x_n\}_{n \ge 0}$ must be eventually constant 
and hence the conditions (i)--(v) hold.
\end{proof}

Proposition \ref{Th1} shows that we can extend the 
Dutta-Onoda version (\cite{DO_CODIM1}, 2.4) of Russell-Sathaye 
criterion for $A$ to be $R^{[1]}$ as follows:

\begin{cor} \label{Cor_RS_DO-ver}
 Let $R \subset A$ be integral domains. 
 Suppose that there exists a prime $p$ in $R$ such that

\begin{enumerate}
 \item [\rm (1)]$p$ is a prime in $A$.
 \item [\rm (2)]$pA \cap R = pR$.
 \item [\rm (3)]$A[1/p] = R[1/p]^{[1]}$.
 \item [\rm (4)]$R/pR$ is algebraically closed in $A/pA$.
\end{enumerate}
Then the following are equivalent:
\begin{enumerate}
 \item [\rm (i)]$A$ is finitely generated over $R$.
 \item [\rm (ii)]$A$ has a retraction to $R$ with finitely generated kernel.
 \item [\rm (iii)]${trdeg}_{R/pR} (A/pA) > 0$.
 \item [\rm (iv)]$A = R^{[1]}$.
\end{enumerate}
\end{cor}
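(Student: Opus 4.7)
The plan splits the equivalence (i)--(iv) into three manageable pieces. The implications (iv) $\Longrightarrow$ (i), (ii), (iii) are immediate: if $A = R[x] = R^{[1]}$, then $A$ is generated by $x$ over $R$; the $R$-algebra map sending $x$ to $0$ is a retraction with principal (hence finitely generated) kernel $(x)$; and since $p$ is prime in $A = R[x]$, the image of $x$ in $A/pA = (R/pR)[\bar{x}]$ remains transcendental over $R/pR$, so $trdeg_{R/pR}(A/pA) \ge 1$.

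Second, the equivalence (i) $\Longleftrightarrow$ (iii) $\Longleftrightarrow$ (iv) is precisely the Dutta--Onoda version (\cite{DO_CODIM1}, 2.4) of the Russell--Sathaye criterion, which the corollary is explicitly advertised as ``extending''. So I would simply invoke it and concentrate my effort on bringing condition (ii) into the chain.

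The genuinely new implication is (ii) $\Longrightarrow$ (iv). Here I would apply Proposition \ref{Th1} directly. Its hypotheses---$p$ prime in $A$, $A[1/p] = R[1/p]^{[1]}$, and the existence of a retraction $\Phi: A \onto R$---are furnished by conditions (1) and (3) of the corollary together with (ii). (Conditions (2) and (4) of the corollary happen to be automatic consequences of the existence of a retraction via Lemma \ref{L0b}, so they pose no compatibility issue.) Since $Ker \ \Phi$ is finitely generated, condition (i) of Proposition \ref{Th1} holds, and the equivalence (i) $\Longrightarrow$ (v) recorded there delivers an element $x \in Ker \ \Phi \setminus pA$ with $A = R[x] = R^{[1]}$, which is exactly (iv).

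Because every step reduces to invoking either Proposition \ref{Th1} or the prior Dutta--Onoda criterion, I do not anticipate a serious technical obstacle; the substantive content of the corollary is the recognition that ``retraction with finitely generated kernel'' is a natural intermediate condition which slots neatly into the existing equivalence, and this is precisely what was packaged into Proposition \ref{Th1} in the preceding development.
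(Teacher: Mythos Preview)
Your proposal is correct and follows precisely the paper's own approach: the paper's proof is a single line invoking (\cite{DO_CODIM1}, 2.4) for the equivalence of (i), (iii), (iv), and Proposition~\ref{Th1} to bring (ii) into the cycle, which is exactly the decomposition you give. Your elaboration of (ii) $\Longrightarrow$ (iv) via the (i) $\Longrightarrow$ (v) implication of Proposition~\ref{Th1}, and your remark that hypotheses (2) and (4) become redundant once a retraction is assumed, are faithful unpackings of that citation.
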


\begin{proof}
 Follows from (\cite{DO_CODIM1}, 2.4) and Proposition \ref{Th1}.
\end{proof}

By repeated application of Proposition \ref{Th1} we get the following:

\begin{cor} \label{Cor1_Th1}
Let $R \subset A$ be integral domains with a retraction $\Phi: A \onto R$. 
Suppose that there exist primes $p_1, p_2, \dots, p_n$ in $R$ such that

\begin{enumerate}
 \item [\rm (1)]$Ker \ \Phi$ is finitely generated.
 \item [\rm (2)]$p_1, p_2, \dots, p_n$ are primes in $A$.
 \item [\rm (3)]$A[\frac{1}{p_1 p_2 \dots p_n }]  = 
 R[\frac{1}{p_1 p_2 \dots p_n }]^{[1]}$.
\end{enumerate}

Then there exists $x \in Ker \ \Phi$ such that $A = R[x] = R^{[1]}$.
\end{cor}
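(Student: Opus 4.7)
The plan is to prove the corollary by induction on $n$. The base case $n=1$ is immediate: the hypotheses match conditions (1) and (2) of Proposition \ref{Th1} with $p := p_1$, and since $\ker \Phi$ is finitely generated, the equivalence (i) $\Longleftrightarrow$ (v) in that proposition yields $x \in \ker \Phi \setminus p_1 A$ with $A = R[x] = R^{[1]}$.

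For the inductive step, assume the result holds for $n-1$ primes. The idea is to first invert $p_n$ and apply the inductive hypothesis to the localized situation. Set $R' := R[1/p_n]$ and $A' := A[1/p_n]$. The retraction $\Phi$ extends to a retraction $\Phi' : A' \onto R'$, and its kernel equals $(\ker \Phi) A'$, hence is finitely generated over $A'$. Each of the primes $p_1, \dots, p_{n-1}$ either stays prime in $R'$ (and then also in $A'$, since in an integral domain the primeness of an element is preserved under any localization in which that element is not trivialized) or else becomes a unit in $R'$; in the latter case we simply discard it from the list. The hypothesis $A[1/(p_1 \cdots p_n)] = R[1/(p_1 \cdots p_n)]^{[1]}$ rewrites as $A'[1/(p_1 \cdots p_{n-1})] = R'[1/(p_1 \cdots p_{n-1})]^{[1]}$. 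Applying the inductive hypothesis to $R' \subset A'$ with the surviving primes produces an element $y \in \ker \Phi'$ such that $A' = R'[y] = {R'}^{[1]}$; that is, $A[1/p_n] = R[1/p_n]^{[1]}$.

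At this point we are exactly in the setting of Proposition \ref{Th1} with $p := p_n$: conditions (1) and (2) of that proposition hold by assumption and by the previous step respectively, and $\ker \Phi$ is finitely generated. The equivalence (i) $\Longleftrightarrow$ (v) of Proposition \ref{Th1} then produces $x \in \ker \Phi \setminus p_n A$ with $A = R[x] = R^{[1]}$, completing the induction.

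The argument is essentially a clean unwinding of Proposition \ref{Th1} once the induction is arranged; no serious obstacle is expected. The only mildly delicate point is verifying, in the inductive step, that each $p_i$ with $i<n$ remains prime in $A[1/p_n]$ (or else becomes a unit, in which case it is simply dropped), so that the inductive hypothesis can legitimately be invoked on the localized data.
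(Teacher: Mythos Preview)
Your proof is correct and matches the paper's approach exactly: the paper simply states that the corollary follows ``by repeated application of Proposition~\ref{Th1}'', which is precisely your induction on $n$. The only point you flag as delicate---that each $p_i$ with $i<n$ either remains prime in $A[1/p_n]$ or becomes a unit---is settled by Lemma~\ref{L0b}(I) ($p_iA\cap R=p_iR$), which guarantees that $p_i$ cannot become a unit in $A[1/p_n]$ without already being one in $R[1/p_n]$.
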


\section{Codimension-one $\mathbb A^1$-fibration with retraction}
In this section we will prove our main theorems (Theorems \ref{Th2} 
and \ref{Th4}) and auxiliary results (Propositions \ref{Th0} and \ref{Th3}).

\medskip

We first state a few preliminary results. 
The first result occurs in (\cite{BD_A1FIBSUBALG}, 3.4).

\begin{lem} \label{B-D_lem_1}
 Let $R$ be a Noetherian ring and $R_1$ a ring containing $R$ which is 
 finitely generated as an $R$-module. 
 If $A$ is a flat $R$-algebra such that $A \otimes_R R_1$ 
 is a finitely generated $R_1$-algebra, then 
 $A$ is a finitely generated $R$-algebra.
\end{lem}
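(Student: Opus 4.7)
The plan is to reduce to a direct application of the Artin--Tate lemma on the tower $R \subseteq A \subseteq A':= A \otimes_R R_1$. Thus the first thing to check is that this is indeed a tower of inclusions: since $R \hookrightarrow R_1$ is injective and $A$ is flat over $R$, tensoring preserves injectivity, so the canonical map $A \to A \otimes_R R_1$, $a \mapsto a \otimes 1$, is injective, identifying $A$ with a subring of $A'$.

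Next, I would verify the two finiteness inputs needed for Artin--Tate. Write $R_1 = Rr_1 + \cdots + Rr_m$ as an $R$-module. On the one hand, $A'$ is a finitely generated $R_1$-algebra by hypothesis, and $R_1$ is a finitely generated $R$-algebra (being module-finite over $R$), so $A'$ is a finitely generated $R$-algebra. On the other hand, viewing $A'$ as an $A$-algebra via the inclusion $A \hookrightarrow A'$, every element $a \otimes r \in A \otimes_R R_1$ satisfies $a \otimes r = \sum_i (a \alpha_i)\otimes r_i = \sum_i (a\alpha_i)\cdot (1 \otimes r_i)$ where $r = \sum_i \alpha_i r_i$ with $\alpha_i \in R$; hence $A'$ is generated as an $A$-module by the finite set $\{1 \otimes r_1, \dots, 1 \otimes r_m\}$, i.e., $A'$ is module-finite over $A$.

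Now the hypotheses of the Artin--Tate lemma are in place: $R$ is Noetherian, $A'$ is a finitely generated $R$-algebra, and $A'$ is a finitely generated $A$-module, with $R \subseteq A \subseteq A'$. The conclusion of Artin--Tate is precisely that $A$ is a finitely generated $R$-algebra, which is what we want.

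There is no real obstacle here; the only subtlety is the initial observation that flatness of $A$ over $R$ gives the inclusion $A \hookrightarrow A \otimes_R R_1$, without which one could not even speak of $A$ as a subring of $A'$. Everything else is a routine unpacking of tensor products and an invocation of a standard descent lemma.
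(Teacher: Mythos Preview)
Your argument is correct. The paper itself does not supply a proof of this lemma; it merely records that the result occurs in \cite{BD_A1FIBSUBALG}, 3.4, so there is no in-paper proof to compare against. Your reduction to the Artin--Tate lemma via the tower $R \subseteq A \subseteq A \otimes_R R_1$ is the standard and expected route, and the use of flatness to secure the inclusion $A \hookrightarrow A \otimes_R R_1$ is exactly the point that makes the argument go through.
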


The following result follows from (\cite{BD_A1FIBSUBALG}, 3.3 and 3.5).

\begin{lem} \label{B-D_lem_2}
 Let $R$ be a Noetherian ring and $A$ a flat $R$-algebra such that, 
 for every minimal prime ideal $P$ of $R$,
 $PA$ is a prime ideal of $A$,  $PA \cap R=P$ and
 $A/PA$ is finitely generated over $R/P$.  
 Then $A$ is finitely generated over $R$.
\end{lem}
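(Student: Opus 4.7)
The plan is a two-step reduction: first reduce to the case where $R$ is reduced by exploiting the nilpotency of the nilradical, and then, in the reduced case, construct a finite $R$-module overring $R_1$ of $R$ such that $A \otimes_R R_1$ is a finitely generated $R_1$-algebra, so that Lemma \ref{B-D_lem_1} applies.

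For the reduction to the reduced case, let $N$ denote the nilradical of $R$, so that $N^k = 0$ for some $k \ge 1$ by Noetherianness. The minimal primes of $R/N$ are the $P/N$ for $P$ minimal in $R$, and I would verify that the hypotheses descend to the pair $R/N \subseteq A/NA$: flatness is preserved by base change; since $N \subseteq P$, one has $(P/N)(A/NA) = PA/NA$, which is prime in $A/NA$ with contraction $P/N$ (using $PA \cap R = P$); and the quotient $(A/NA)/(PA/NA) \cong A/PA$ is finitely generated over $R/P$ by hypothesis. Granting the reduced case, $A/NA$ would then be finitely generated over $R/N$; lifting generators to $a_1, \dots, a_m \in A$ and setting $B := R[a_1, \dots, a_m]$, we obtain $A = B + NA$, and a standard iteration using $NB \subseteq B$ yields $A = B + N^k A = B$, so $A$ is finitely generated over $R$.

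For the reduced case, let $P_1, \dots, P_n$ be the (finitely many) minimal primes of $R$ and set $R_1 := R/P_1 \times \cdots \times R/P_n$. The diagonal map $R \hookrightarrow R_1$ is injective since $R$ is reduced. By prime avoidance, choose $b_i \in \bigcap_{j \ne i} P_j$ with $b_i \notin P_i$, and set $s := b_1 + \cdots + b_n$. The image of $s$ in each $R/P_i$ equals $\bar b_i \ne 0$, so $s$ lies in no minimal prime of $R$; as the zerodivisors of a reduced Noetherian ring form the union of the minimal primes, $s$ is a non-zerodivisor. A short calculation shows $sR_1 \subseteq R$: the product $s \cdot (\bar r_1, \dots, \bar r_n) = (\bar b_1 \bar r_1, \dots, \bar b_n \bar r_n)$ is the diagonal image of $b_1 r_1 + \cdots + b_n r_n$. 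Hence $R_1$ lies in the cyclic $R$-submodule $\tfrac{1}{s}R$ of the total ring of fractions of $R$, and so, since $R$ is Noetherian, $R_1$ is a finitely generated $R$-module. By flatness, $A \otimes_R R_1 = \prod_{i=1}^n A/P_i A$ is a finitely generated $R_1$-algebra by hypothesis, and Lemma \ref{B-D_lem_1} then yields that $A$ is finitely generated over $R$.

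The main obstacle is producing, in the reduced case, the non-zerodivisor $s$ with $sR_1 \subseteq R$, and hence the module-finite overring $R \subseteq R_1$; once this is in place, Lemma \ref{B-D_lem_1} closes the argument routinely. The crucial underlying fact is that the associated primes of a reduced Noetherian ring coincide with its minimal primes, which is exactly what guarantees that $s$ is a non-zerodivisor.
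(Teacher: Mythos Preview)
Your argument is correct. The reduction to the reduced case via the nilpotent lifting $A = B + N^k A = B$ is valid (the key step $NA = NB + N^2A$ follows since $A = B + NA$ as $R$-modules), and in the reduced case your construction of the non-zerodivisor $s = b_1 + \cdots + b_n$ with $sR_1 \subseteq R$ is exactly right, so $R_1 = \prod R/P_i$ is module-finite over $R$ and Lemma~\ref{B-D_lem_1} applies.

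As for comparison with the paper: the paper does not actually prove Lemma~\ref{B-D_lem_2} but simply records that it follows from results 3.3 and 3.5 of \cite{BD_A1FIBSUBALG}. Your proof is self-contained modulo Lemma~\ref{B-D_lem_1} (which is 3.4 of that same reference), and the two-step structure you use---reduction modulo the nilradical followed by passage to the finite overring $\prod R/P_i$---is the standard route and almost certainly what the cited lemmas encode. So there is no genuine divergence in method; you have essentially reconstructed the intended argument.
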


The next result is easy to prove.

\begin{lem} \label{L4}
 Let $R$ be a ring and $A$ an $R$-algebra. 
 If $R'$ is a faithfully flat algebra over $R$ such that 
 $A \otimes_R R'$ is finitely generated over $R'$, 
 then $A$ is finitely generated over $R$.
\end{lem}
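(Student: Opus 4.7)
The plan is to reduce finite generation of $A$ over $R$ to a descent argument using faithful flatness of $R'$. The key intuition is that an $R'$-algebra generating set for $A\otimes_R R'$ can be chosen to lie in $A \otimes 1$, and then the $R$-subalgebra of $A$ they generate should already equal $A$.

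First I would fix $R'$-algebra generators $y_1,\dots,y_n$ of $A \otimes_R R'$. Each $y_i$ is a finite sum $y_i = \sum_j a_{ij}\otimes r'_{ij}$ with $a_{ij}\in A$ and $r'_{ij}\in R'$. Let $B$ be the $R$-subalgebra of $A$ generated by the finite collection $\{a_{ij}\}$, so $B$ is a finitely generated $R$-algebra sitting inside $A$. The goal then becomes showing $B = A$.

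Next I would compare $B\otimes_R R'$ and $A\otimes_R R'$. Since $R'$ is flat over $R$, the inclusion $B\hookrightarrow A$ tensors up to an injection $B\otimes_R R' \hookrightarrow A\otimes_R R'$, and its image is an $R'$-subalgebra of $A\otimes_R R'$. This image contains every $a_{ij}\otimes 1$ and every $1\otimes r'$, hence every $y_i=\sum_j(a_{ij}\otimes 1)(1\otimes r'_{ij})$. Because the $y_i$ generate $A\otimes_R R'$ as an $R'$-algebra, the map is also surjective, so $B\otimes_R R' = A\otimes_R R'$.

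Finally I would descend. Applying $-\otimes_R R'$ to the short exact sequence $0\to B\to A\to A/B\to 0$ of $R$-modules and using flatness of $R'$ yields $(A/B)\otimes_R R' = 0$. Faithful flatness of $R'$ then forces $A/B=0$, i.e. $A=B$, which is finitely generated over $R$. There is no serious obstacle here; the only subtlety to be careful about is recording that the image of $B\otimes_R R'$ in $A\otimes_R R'$ is actually an $R'$-subalgebra (not merely an $R$-submodule), which is what lets the finitely many $a_{ij}$ generate the full $R'$-algebra after extension of scalars.
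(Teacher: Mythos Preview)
Your argument is correct and is precisely the standard faithfully-flat descent argument for finite generation. The paper does not actually supply a proof of this lemma---it simply records it as ``easy to prove''---so there is nothing to compare beyond noting that your proof is the natural one the authors have in mind.
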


We now quote a theorem on finite generation due to N. Onoda 
(\cite{O_Subring}, 2.20).

\begin{thm} \label{O_f.g.}
Let $R$ be a Noetherian domain and let $A$ be an integral domain 
containing $R$ such that

\begin{enumerate}
\item [\rm (1)]There exists a non zero element $t \in A$ for which 
$A[1/t]$ is a finitely generated $R$-algebra.
\item [\rm (2)]$A_{\m}$ is a finitely generated $R_{\m}$-algebra 
for each maximal ideal $\m$ of $R$.
\end{enumerate}

Then $A$ is a finitely generated $R$-algebra.
\end{thm}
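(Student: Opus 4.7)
The plan is to use hypothesis (1) to fix a finitely generated base $B \subseteq A$, then to patch in the local generators supplied by hypothesis (2) via Noetherian induction on $\mathrm{Spec}\, R$, using quasi-compactness to glue.

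First, pick $a_1, \ldots, a_k \in A$ so that $A[1/t] = R[a_1, \ldots, a_k][1/t]$, and set $B := R[a_1, \ldots, a_k, t]$. Then $B$ is a finitely generated (hence Noetherian) $R$-subalgebra of $A$ with $t \in B$ and $B[1/t] = A[1/t]$; in particular the $B$-module $A/B$ is $t$-power torsion, and $A_{\p} = B_{\p}$ at every $\p \in \mathrm{Spec}\, R$ where $t$ acts invertibly. Thus all possible ``trouble'' is confined to the closed subset $V(tB \cap R) \subseteq \mathrm{Spec}\, R$. It will suffice to produce a finitely generated $R$-subalgebra $C \supseteq B$ of $A$ with $A_{\m} = C_{\m}$ for every maximal ideal $\m$ of $R$, since such a $C$ must then equal $A$ (because $A/C$ vanishes locally everywhere).

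To construct $C$, I would run Noetherian induction on the closed subsets $Z \subseteq \mathrm{Spec}\, R$ admitting some finitely generated $R$-subalgebra $C_Z \supseteq B$ of $A$ with $A_{\p} = (C_Z)_{\p}$ for all $\p \notin Z$. The base case $Z = V(tB \cap R)$ is handled by $C_Z = B$. Pick a minimal such $Z$ and suppose, for contradiction, that $Z \neq \emptyset$. Let $\p$ be the generic point of an irreducible component of $Z$. By hypothesis (2), applied to any maximal ideal $\m \supseteq \p$ and then further localised at $\p$, we may write $A_{\p} = R_{\p}[c_1, \ldots, c_s]$ for some $c_i \in A$. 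Enlarging $C_Z$ to $C' := C_Z[c_1, \ldots, c_s]$, we obtain $(A/C')_{\p} = 0$.

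The main obstacle is to upgrade this pointwise vanishing to vanishing on a Zariski-open neighbourhood $D(f) \ni \p$, which would yield a proper closed $Z' \subsetneq Z$ and contradict the minimality of $Z$. The difficulty is that $A/C'$ is not \emph{a priori} a finitely generated $C'$-module, so the usual ``annihilator contains an element outside $\p$'' argument for coherent sheaves does not directly apply. My plan is to use Noetherianity of $R_{\p}[X_1, \ldots, X_s]$: the kernel of the surjection $R_{\p}[X_1, \ldots, X_s] \twoheadrightarrow A_{\p}$, $X_i \mapsto c_i$, is finitely generated, so lifting its generators to $R[X_1, \ldots, X_s]$ produces a finitely presented $R$-algebra $T$ with an $R$-algebra map $T \to A$ that factors through $C'$ and becomes an isomorphism at $\p$. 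The remaining delicate step is to combine this finite presentation with the $t$-power torsion structure of $A/C'$ to locate a single $f \in R \setminus \p$ with $A_f = C'_f$; once that is done, the Noetherian induction terminates with $Z = \emptyset$ and the corresponding $C$ equals $A$.
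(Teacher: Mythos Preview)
The paper does not prove this theorem; it merely quotes it from Onoda (\cite{O_Subring}, 2.20) as a tool. So there is no ``paper's own proof'' to compare against, and your proposal must stand on its own.

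Your outline is sound up to the point you yourself flag as incomplete: having enlarged to a finitely generated $C' \supseteq B$ with $C'_{\p} = A_{\p}$, you need $f \in R \setminus \p$ with $C'_f = A_f$, and you leave this as a ``remaining delicate step''. Your suggested route via a finitely presented model $T$ runs into trouble precisely at surjectivity, since $A/\mathrm{im}(T \to A)$ is not known to be finitely generated, and it is not clear how the $t$-torsion alone rescues this. So as written, the proposal has a genuine gap.

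There is, however, a clean way to close it that stays inside your setup. Since $C'$ is Noetherian, the ideal $J := tA \cap C'$ of $C'$ is finitely generated, and $tC' \subseteq J$. Localising at $\p$ gives $J_{\p} = tA_{\p} \cap C'_{\p} = tA_{\p} = tC'_{\p}$ (using $C'_{\p} = A_{\p}$), so the finitely generated $C'$-module $J/tC'$ vanishes at $\p$; hence some $f \in R \setminus \p$ kills each of its finitely many generators, giving $J_f = tC'_f$, i.e., $tA_f \cap C'_f = tC'_f$. Since $B \subseteq C'$ forces $C'_f[1/t] = A_f[1/t]$, Lemma~\ref{L0a} yields $C'_f = A_f$. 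This produces the strictly smaller closed set $Z' = Z \cap V(f)$ and completes your Noetherian induction. With this patch your argument goes through; the finitely presented $T$ is unnecessary.
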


The results on $\mathbb{A}^1$-fibrations in 
(\cite{BD_A1FIBSUBALG}, \cite{D_MOR}, \cite{DO_CODIM1}) crucially 
involve certain patching techniques. 
We state below one such ``patching lemma'' 
(\cite{DO_CODIM1}, 3.2).

\begin{lem} \label{DO_L2}
 Let $R \subset A$ be integral domains with 
 $A$ being faithfully flat over $R$. Suppose that there exists a 
 non-zero element $t \in R$ such that

\begin{enumerate}
 \item [\rm(1)]$A[1/t] = R[1/t]^{[1]}$.
 \item [\rm(2)]$S^{-1}A = (S^{-1} R)^{[1]}$, where 
 $S = \{ r\in R| \ r$ is not a zero-divisor in $R/tR\}$.

\end{enumerate}

Then there exists an invertible ideal $I$ in $R$ such that $A \cong Sym_R (I)$.
\end{lem}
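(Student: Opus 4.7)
The plan is to recover the invertible ideal $I$ by gluing the polynomial ring structures on $A[1/t]$ and $S^{-1}A$ along their common localization.

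First, fix polynomial generators $A[1/t] = R[1/t][X]$ and $S^{-1}A = (S^{-1}R)[Y]$. On the common overring $(S^{-1}R)[1/t][X] = (S^{-1}R)[1/t][Y]$, both $X$ and $Y$ are polynomial generators of the same one-variable polynomial ring, so $Y = \alpha X + \beta$ for some $\alpha \in ((S^{-1}R)[1/t])^{\times}$ and $\beta \in (S^{-1}R)[1/t]$. Because the degree filtration is invariant under affine change of variable, the degree filtrations on $A[1/t]$ (via $X$) and on $S^{-1}A$ (via $Y$) agree over the overlap, and descend to a well-defined filtration
\[
A^{(n)} := A \cap F^{n}(A[1/t]) = A \cap F^{n}(S^{-1}A).
\]

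Next I would prove two patching identities: $R = R[1/t] \cap S^{-1}R$ inside $(S^{-1}R)[1/t]$, by a short induction using that $t$ is a non-zero-divisor in $R$ and that the elements of $S$ are non-zero-divisors modulo $t$; and $A = A[1/t] \cap S^{-1}A$ inside $(S^{-1}A)[1/t]$, by tensoring the Mayer--Vietoris exact sequence $0 \to R \to R[1/t] \oplus S^{-1}R \to (S^{-1}R)[1/t]$ with the faithfully flat $R$-algebra $A$. The first identity gives $A^{(0)} = R$. Set $I := A^{(1)}/R$. At a prime $P$ with $t \notin P$ one has $A_{P} = R_{P}[X]$, so $I_{P} = R_{P}\bar X$ is free of rank one; at a prime $P$ with $S \cap P = \emptyset$ one has $A_{P} = R_{P}[Y]$, so $I_{P} = R_{P}\bar Y$ is free; and at a remaining prime (where both $t \in P$ and $S \cap P \neq \emptyset$) both local descriptions coexist and the multiplication-by-$\alpha$ cocycle classifies $I_{P}$ as a line bundle over $R_{P}$, which is necessarily trivial since $\mathrm{Pic}(R_{P}) = 0$. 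Hence $I$ is an invertible $R$-module.

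Since $I$ is invertible (hence projective), the extension $0 \to R \to A^{(1)} \to I \to 0$ splits, yielding an $R$-linear embedding $I \hookrightarrow A^{(1)} \subseteq A$. By the universal property of the symmetric algebra this extends to an $R$-algebra homomorphism $\mathrm{Sym}_{R}(I) \to A$, and the multiplicativity $A^{(m)} A^{(n)} \subseteq A^{(m+n)}$ of the filtration ensures it respects filtrations. This map becomes an isomorphism after localizing at every prime of $R$ (where it reduces to the tautology $R_{P}[\bar X] = \mathrm{Sym}_{R_{P}}(R_{P}\bar X)$), hence is itself an isomorphism.

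The main obstacle is the local verification that $I$ is invertible: the cases $t \notin P$ and $S \cap P = \emptyset$ are immediate, but handling the primes with $t \in P$ and $S \cap P \neq \emptyset$ uniformly -- using only the two patches $A[1/t]$, $S^{-1}A$ and their affine transition $Y = \alpha X + \beta$, without any Noetherian, normality, or dimension hypothesis on $R$ -- is where the real content lies. Once $I$ is known to be invertible, the reconstruction $A \cong \mathrm{Sym}_{R}(I)$ is essentially formal.
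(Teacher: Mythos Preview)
The paper does not prove this lemma; it is quoted from Dutta--Onoda (\cite{DO_CODIM1}, 3.2) as a known ``patching lemma,'' so there is no in-paper proof to compare against.

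Your outline has the right shape---match the two polynomial structures along the overlap, build a degree filtration on $A$, and take $I = A^{(1)}/R$---and the identities $R = R[1/t]\cap S^{-1}R$ and $A = A[1/t]\cap S^{-1}A$ are correct and proved the right way. But there is a genuine error in the local check that $I$ is invertible. The two pieces $D(t)=\{P: t\notin P\}$ and $\operatorname{Spec}(S^{-1}R)=\{P: P\cap S=\emptyset\}$ do \emph{not} cover $\operatorname{Spec} R$: any prime $P$ containing $t$ that also contains a non-zero-divisor of $R/tR$ lies in neither (e.g.\ $R=k[x,y]$, $t=x$, $P=(x,y)$). At such a $P$ neither $A[1/t]$ nor $S^{-1}A$ localizes to $A_P$, so your sentence ``both local descriptions coexist'' is exactly backwards---neither does, and there is no transition unit $\alpha$ living over $R_P$ to appeal to. The same gap recurs in your last paragraph, where you again argue prime by prime that $\mathrm{Sym}_R(I)\to A$ is an isomorphism.

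What makes the lemma work is not a Zariski-cover argument but the faithful flatness of $A$ over $R$, which you never use. One viable route: embed $A$ in $K[X]$ (with $K=\operatorname{Frac} R$), identify each graded piece $A^{(n)}/A^{(n-1)}$ with a fractional $R$-ideal $I_n\subseteq K$, and use the two polynomial descriptions together with faithful flatness of $A$ to force $I_n=I_1^{\,n}$ and $I_1$ invertible (faithful flatness is what guarantees $I_1$ is ``large enough'' at the primes your cover misses). You were right to flag this step as the crux, but the mechanism you proposed for it does not go through.
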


We now observe a property of algebras with retractions.

\begin{lem} \label{L3}
Let $R$ be an integral domain with quotient field $K$ 
and $A$ be an integral domain containing $R$ with a 
retraction $\Phi: A \onto R$ such that

\begin{enumerate}
\item [\rm (1)] $Ker \ \Phi$ is finitely generated.
\item [\rm (2)] $A \otimes_R K = K^{[1]}$.
\end{enumerate}

Then there exists $t \in R$ and $F \in Ker \ \Phi$ such that 
$A[1/t] = R[1/t][F]$.
\end{lem}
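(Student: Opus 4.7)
The plan is to first identify, inside $Ker \ \Phi \cap A$, an element $F$ that generates $A \otimes_R K$ over $K$, then to choose a single $t \in R$ clearing the $F$-denominators of the finitely many generators of $Ker \ \Phi$, and finally to upgrade this to the statement that the same $t$ clears denominators for every element of $A$ by a degree comparison inside $K[F]$.

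Since $A$ is a domain containing $R$, the canonical map $A \to A \otimes_R K$ is injective. Pick $G \in A \otimes_R K$ with $A \otimes_R K = K[G]$ and write $G = a/s$ with $a \in A$ and $s \in R \setminus \{0\}$; since $s$ is a unit in $K$, $K[G] = K[a]$. Setting $F := a - \Phi(a) \in A$, we get $F \in Ker \ \Phi$ and $A \otimes_R K = K[F]$. Now let $Ker \ \Phi = (a_1, \dots, a_m)A$. Viewed in $K[F]$, each $a_i$ has vanishing constant term (its image under the extended retraction $\Phi_K : K[F] \to K$ is zero), so we may write $a_i = \sum_{j \ge 1} \alpha_{ij} F^j$ with $\alpha_{ij} \in K$ (finite sum). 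Choose $t \in R \setminus \{0\}$ so that $t\alpha_{ij} \in R$ for all $i, j$; then $a_i \in R[1/t][F]$ for every $i$.

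The heart of the proof is the claim $A \subseteq R[1/t][F]$, from which $F \in A$ yields $A[1/t] = R[1/t][F]$. Set $J := Ker \ \Phi$ and let $M_N$ denote the $R$-submodule of $A$ spanned by all monomials $a_1^{e_1} \cdots a_m^{e_m}$ with $\sum e_i \le N$. An easy induction from $A = R + J$ and $J = (a_1, \dots, a_m)A$ gives
\begin{equation*}
A \;=\; M_N + J^{N+1} \qquad \text{for every } N \ge 0.
\end{equation*}
Given $x \in A$, write $x = \sum_{i=0}^d \beta_i F^i$ in $K[F]$ and pick any $N \ge d$; decompose $x = p + y$ with $p \in M_N$ and $y \in J^{N+1}$. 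Since $J \subseteq F \cdot K[F]$, we have $J^{N+1} \subseteq F^{N+1} K[F]$, so the $F^i$-coefficient of $y$ vanishes for $i \le N$; on the other hand $p \in R[a_1, \dots, a_m] \subseteq R[1/t][F]$, so all $F^i$-coefficients of $p$ lie in $R[1/t]$. Matching coefficients of $F^i$ for $0 \le i \le d$ in $x = p + y$ forces $\beta_i \in R[1/t]$, hence $x \in R[1/t][F]$.

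The main obstacle is that $A$ need not be finitely generated as an $R$-algebra, so a naive iterated rewriting of an arbitrary $x \in A$ in terms of the $a_i$'s need not terminate inside $R[1/t][F]$. The degree comparison above sidesteps this difficulty by exploiting the gap $\deg_F(J^{N+1}) \ge N+1$, which allows one to truncate the expansion as soon as $N$ exceeds the $F$-degree of $x$.
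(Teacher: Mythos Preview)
Your proof is correct and follows essentially the same strategy as the paper's: pick $F \in Ker\,\Phi$ with $A \otimes_R K = K[F]$, use finite generation of $Ker\,\Phi$ to find a single $t \in R$, and conclude by a degree-truncation argument inside $K[F]$. The paper organizes the middle step slightly differently---it chooses $t$ so that $(Ker\,\Phi)\,A[1/t] = F\,A[1/t]$ becomes principal and then iterates the splitting $A[1/t] = R[1/t] \oplus F\,A[1/t]$ to reach $A[1/t] = \sum_{j \le n} R[1/t]\,F^j + F^{n+1}A[1/t]$---but your filtration $A = M_N + J^{N+1}$ plays the same role and the underlying idea is identical.
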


\begin{proof}
 Let $S = R \backslash \{ 0 \}$. By (2), $S^{-1}A = K^{[1]}$. 
 Since $A$ has a retraction $\Phi$, it is easy to see that 
 there exists $F \in Ker \ \Phi$ such that $S^{-1}A = K[F] 
 (=K^{[1]})$ and hence $F (S^{-1}A) = (Ker \ \Phi) S^{-1}A$. 
 Therefore, by (1), there exists $t \in S$ such that 
 $F A[1/t] = (Ker \ \Phi) A[1/t]$. Thus 
 $F A[1/t]$ is the kernel of the induced retraction 
 $\Phi_t : A[1/t] \onto R[1/t]$. Hence we have
\begin{eqnarray*}
A[1/t] &=& R[1/t] \oplus F A[1/t]\\
&=& R[1/t] \oplus F R[1/t] \oplus F^2 A[1/t]\\ 
&\ldots& \\
&=& R[1/t] \oplus F  R[1/t] \oplus F^2  R[1/t] \oplus \dots \oplus 
F^n  R[1/t] \oplus F^{n+1}  A[1/t] \ \ \forall n \in \mathbb{N}.
\end{eqnarray*}
As $S^{-1}A = \underset{n \ge 0} {\overset{}{\oplus}} K F^n$, 
it follows that $A[1/t] = R[1/t][F]$.
\end{proof}

\begin{rem}
In Lemma \ref{L3} if we assume that $Ker \ \Phi$ is principal, 
say, $Ker \ \Phi = (G)$, then $A = R[G]$.
\end{rem}

We now deduce a local-global result. Our approach gives a 
simpler proof of Theorem \ref{BDO} which is obtained in 
\cite{LOC_A1_CODIM1} as a consequence of a 
highly technical structure theorem.

\begin{prop} \label{Th0}
Let $R$ be either a Noetherian domain or a Krull domain 
with quotient field $K$ and $A$ a flat $R$-algebra with 
a retraction $\Phi : A \onto R$ such that 

\begin{enumerate}
 \item [\rm(1)]$Ker \ \Phi$ is finitely generated.
 \item [\rm(2)]$A_P = {R_P}^{[1]}$ for every prime ideal $P$ of $R$ 
 satisfying $depth \ (R_P) =1$.
\end{enumerate}

Then there exists an invertible ideal $I$ of $R$ such that 
$A \cong Sym_R (I)$.
\end{prop}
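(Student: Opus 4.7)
The plan is to verify the two hypotheses of the patching Lemma \ref{DO_L2} and apply it. Faithful flatness of $A$ over $R$ is automatic from flatness plus the retraction, since $R$ is an $R$-module direct summand of $A$, so $\m A \ne A$ for every maximal ideal $\m$ of $R$. I therefore need (a) a nonzero $t \in R$ with $A[1/t] = R[1/t]^{[1]}$, and (b) $S^{-1}A = (S^{-1}R)^{[1]}$, where $S = \{r \in R \mid r \text{ is a non-zero-divisor on } R/tR\}$.

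For (a), first observe that $A \otimes_R K = K^{[1]}$: any minimal prime $P$ over a nonzero element of $R$ satisfies $depth \ (R_P) = 1$, so hypothesis (2) gives $A_P = R_P^{[1]}$, and further localization at $R \setminus \{0\}$ yields the claim. Applying Lemma \ref{L3} to $R \subset A$ then produces $t \in R$ and $F \in Ker \ \Phi$ with $A[1/t] = R[1/t][F]$; since $F$ generates $K[F] = K^{[1]}$ it is transcendental over $R[1/t]$, so $A[1/t] = R[1/t]^{[1]}$.

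For (b), set $T := S^{-1}R$ and $B := S^{-1}A$. In the Noetherian case the maximal ideals of $T$ are the (finitely many) associated primes of $R/tR$, each of depth $1$; in the Krull case they are the finitely many height-one primes containing $t$, making $T$ a semi-local PID. Either way $T$ is semi-local and $B_{\m} = T_{\m}^{[1]}$ at every maximal $\m$ of $T$ by hypothesis. Applying Lemma \ref{L3} now to $T \subset B$ produces $F' \in Ker \ \Phi'$ transcendental over $K$. Writing $B_\m = T_\m[y_\m]$ with $y_\m \in Ker \ \Phi'_\m$ the polynomial variable, $F'$ must have degree one in $y_\m$ over $K$ (since it generates $K^{[1]}$) with zero constant term (after applying the retraction), so $F' = a_\m y_\m$ for some nonzero $a_\m \in T_\m$.

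The key step, and the main obstacle, is a Picard-style untwisting made possible by semi-locality. The conormal module $I/I^2$ (where $I = Ker \ \Phi'$) is finitely generated and locally free of rank one, hence projective of rank one, and hence free over the semi-local $T$. Fixing an isomorphism $I/I^2 \cong T$ and letting $b_0 \in T$ be the image of $\bar F'$, one checks $b_0 T_\m = a_\m T_\m$ at every $\m$, so $F'' := F'/b_0$ is a unit multiple of $y_\m$ in each $B_\m$. Since $B$ is torsion-free over $T$ and $T$ is semi-local, $B = \bigcap_\m B_\m$ inside $B \otimes_T K$, whence $F'' \in B$. Then $T_\m[F''] = B_\m$ for each $\m$, and prime avoidance on $T$ forces $B/T[F''] = 0$, giving $B = T[F''] = T^{[1]}$. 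Lemma \ref{DO_L2} then yields $A \cong Sym_R(I)$ for an invertible ideal $I$ of $R$. The difficulty is precisely this untwisting: the $F'$ from Lemma \ref{L3} need not generate $B_\m$ over $T_\m$, and triviality of $Pic(T)$ is essential to rescale it.
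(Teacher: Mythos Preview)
Your proof is correct and follows the same architecture as the paper's: obtain condition (1) of Lemma \ref{DO_L2} from Lemma \ref{L3}, obtain condition (2) from semi-locality of $S^{-1}R$ together with the hypothesis $A_P = R_P^{[1]}$ at depth-one primes, and conclude via Lemma \ref{DO_L2}. The only difference is that the paper dispatches the implication ``$S^{-1}R$ semilocal and $S^{-1}A$ locally $\mathbb{A}^1$ $\Rightarrow$ $S^{-1}A = (S^{-1}R)^{[1]}$'' in a single clause, treating it as known, whereas you supply a self-contained conormal-module/untwisting argument for it; your argument is correct (the phrase ``prime avoidance'' is a slight misnomer---you are just using that a module vanishing at every maximal ideal is zero), and makes explicit exactly the point where triviality of $\mathrm{Pic}$ over a semilocal domain enters.
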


\begin{proof}
The case $dim \ R = 0$ is trivial. So we assume that 
$dim \ R \ge 1$. Note that $A$ is a faithfully flat $R$-algebra 
and an integral domain. By Lemma \ref{L3}, $A[1/t]=R[1/t][F]$. 
If $t$ is a unit in $R$, then $A=R^{[1]}$ and we would be through. 
So we assume that $t$ is a non-unit in $R$.

\smallskip

Let $P_1, P_2, ..., P_s$ be the associated prime ideals of $tR$. 
Let $S = R \backslash (\underset{i=1} {\overset{s}{\cup}}{P_i}) = 
\{ r\in R| \ r$ is not a zero-divisor in $R/tR\} $. 
By (2), for each maximal ideal $\m$ of $S^{-1}R$, 
${(S^{-1}A)}_{\m}={{(S^{-1}R)}_{\m}}^{[1]}$ and 
hence $S^{-1} A = {(S^{-1}R)}^{[1]}$, $S^{-1}R$ being a semilocal domain. 
Hence, by Lemma \ref{DO_L2},  
$A \cong Sym_R ( I )$ for some invertible ideal $I$ of $R$.
\end{proof}

We now prove Theorem A for the case $R$ is a Krull domain.

\begin{thm} \label{Th2}
Let $R$ be a Krull domain with quotient field $K$ and $A$ a flat 
$R$-algebra with a retraction $\Phi : A \onto R$ such that

\begin{enumerate}
 \item [\rm(1)]Ker $\Phi$ is finitely generated.
 \item [\rm(2)]$A \otimes_R K = K^{[1]}$.
 \item [\rm(3)]$A \otimes_R k(P)$ is an integral domain for each 
 height one prime ideal $P$ of $R$.
\end{enumerate}

Then there exists an invertible ideal $I$ of $R$ such that 
$A \cong Sym_R ( I )$.
\end{thm}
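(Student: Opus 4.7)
My plan is to reduce Theorem \ref{Th2} to Proposition \ref{Th0} by promoting the codimension-one \emph{domain} hypothesis (3) to the stronger statement that $A_P = R_P^{[1]}$ at every height-one prime $P$ of $R$. The promotion is effected, one prime at a time, by the Russell--Sathaye-type criterion Proposition \ref{Th1}.

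First I would note that $A$ is an integral domain containing $R$. Since $A$ is flat, hence torsion-free, over the domain $R$, the natural map $A \hookrightarrow A \otimes_R K = K^{[1]}$ is injective by hypothesis (2); thus $A$ is a domain. The retraction $\Phi$ splits the structure map $R \to A$, so $R \subset A$ as domains and $\Phi$ is a retraction in the sense required by Propositions \ref{Th1} and \ref{Th0}.

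Next, fix a height-one prime $P$ of $R$. Because $R$ is a Krull domain, $R_P$ is a DVR; let $p$ be a uniformiser. Localising produces a retraction $\Phi_P : A_P \onto R_P$ of integral domains whose kernel $(\ker \Phi)_P$ is finitely generated. The two hypotheses of Proposition \ref{Th1} are met for the triple $(R_P, A_P, p)$: by hypothesis (3), $A_P/pA_P \cong A \otimes_R k(P)$ is an integral domain, so $p$ is prime in $A_P$; and by hypothesis (2),
\[
A_P[1/p] \;=\; A \otimes_R K \;=\; K^{[1]} \;=\; R_P[1/p]^{[1]}.
\]
Since $\ker \Phi_P$ is finitely generated, the equivalence (i)$\Leftrightarrow$(iv) of Proposition \ref{Th1} delivers $A_P = R_P^{[1]}$.

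Finally, with the codimension-one polynomial structure established at every height-one prime of the Krull domain $R$, I would invoke Proposition \ref{Th0} in its Krull-domain form. The hypothesis there is the local polynomial-algebra condition at primes of depth one; in a Krull domain every height-one prime $P$ satisfies $\mathrm{depth}(R_P) = 1$ since $R_P$ is a DVR, and the proof of Proposition \ref{Th0} only appeals to the hypothesis at the associated primes of a principal ideal $tR$, which in a Krull domain are all of height one. Thus Proposition \ref{Th0} produces an invertible ideal $I$ of $R$ with $A \cong Sym_R(I)$, as desired. I do not anticipate a substantive obstacle: the bulk of the work is already encapsulated in Propositions \ref{Th1} and \ref{Th0}, and the present theorem amounts to matching their hypotheses across the localisation--globalisation boundary, with the only mildly delicate point being the passage from hypothesis (3) to $A_P = R_P^{[1]}$ at each height-one prime, which is handled cleanly by Proposition \ref{Th1}.
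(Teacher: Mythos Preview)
Your proof is correct and follows essentially the same approach as the paper: reduce to Proposition \ref{Th0} by showing $A_P = R_P^{[1]}$ at each height-one prime via the Russell--Sathaye-type criterion, using that $R_P$ is a DVR so its uniformiser is prime in $A_P$ by hypothesis (3) and becomes a unit generically by hypothesis (2). The only cosmetic difference is that the paper cites Corollary \ref{Cor1_Th1} rather than Proposition \ref{Th1} directly, and your additional remarks (that $A$ is a domain, and that the associated primes of $tR$ in a Krull domain are all of height one) make explicit points the paper leaves implicit.
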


\begin{proof}
Let $P$ be a prime ideal in $R$ for which $depth \ (R_P) (= ht \ P ) = 1$. 
Then $R_P$ is a DVR. Let $\pi_P$ be the uniformising parameter of 
$R_P$. Note that the retraction $\Phi : A \onto R$ induces a 
retraction $\Phi_P : A_P \onto R_P$ with finitely generated kernel, 
condition (2) ensures that $A_P[1/\pi_P]=R_P[1/\pi_P]^{[1]}=K^{[1]}$, 
and condition (3) ensures that $\pi_P$ is a prime in $A_P$. 
Hence, by Corollary \ref{Cor1_Th1}, $A_P = {R_P}^{[1]}$. 
Therefore, by Proposition \ref{Th0}, $A \cong Sym_R (I)$ for some 
invertible ideal $I$ of $R$.
\end{proof}

As an immediate consequence we get the following variant of a 
L\" uroth-type result over UFD (see \cite{RS_FIND}, 3.4):

\begin{cor} \label{Cor1_Th2}
Let $R$ be a UFD with quotient field $K$ and $A$ a flat $R$-algebra with a 
retraction $\Phi: A \onto R$ such that

\begin{enumerate}
 \item [\rm(1)]$Ker \ \Phi$ is finitely generated.
 \item [\rm(2)]$A \otimes_R K  = K^{[1]}$.
 \item [\rm(3)]$A \otimes_R k(P)$ is an integral domain 
 for each height one prime ideal $P$ of $R$.
\end{enumerate}
Then there exists $x \in Ker \ \Phi$ such that $A = R[x] = R^{[1]}$.
\end{cor}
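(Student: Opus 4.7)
The plan is to reduce the corollary to Theorem \ref{Th2} and then exploit the strong structure of a UFD. Since a UFD is automatically a Krull domain and all three hypotheses here are exactly the hypotheses of Theorem \ref{Th2}, that theorem applies and gives an invertible ideal $I$ of $R$ with $A \cong Sym_R(I)$.

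Next I would use the fact that over a UFD every invertible ideal is principal. Indeed, an invertible ideal $I$ is of height at most one, and localising at each height-one prime $P$ shows that $I R_P = (R_P)\pi_P^{n_P}$ for some integer $n_P$ which vanishes outside a finite set; since $R$ is a UFD, the element $\prod \pi^{n_P}$ actually lies in $K$ and generates $I$. Hence $I \cong R$ as an $R$-module and therefore $Sym_R(I) \cong Sym_R(R) = R^{[1]}$. Consequently $A = R[y]$ for some $y \in A$.

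Finally, to locate the generator inside $Ker\ \Phi$, I would put $x := y - \Phi(y)$. Then $x \in Ker\ \Phi$ by construction, and obviously $R[x] = R[y] = A$, so $A = R[x] = R^{[1]}$. (This last step is just the content of Lemma \ref{L1} applied to the single-element generating set $\{y\}$.)

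There is no real obstacle: the whole corollary is a straightforward specialisation of Theorem \ref{Th2} once one observes that invertible ideals in a UFD are principal. The only point that requires any care is remembering to translate a bare polynomial generator $y$ into one lying in $Ker\ \Phi$, which is immediate by the shift $y \mapsto y - \Phi(y)$.
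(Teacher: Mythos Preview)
Your proposal is correct and is exactly the intended argument: the paper states the corollary as an immediate consequence of Theorem~\ref{Th2}, and you have spelled out the two obvious missing steps (a UFD is a Krull domain, and an invertible ideal in a UFD is principal, so $Sym_R(I)\cong R^{[1]}$). The final shift $x:=y-\Phi(y)$ to place the generator inside $Ker\,\Phi$ is also the natural way to finish.
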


\bigskip

We now prove Proposition A.

\begin{prop} \label{Th3}
Let $R$ be a Noetherian ring and $A$ be a flat $R$-algebra with a 
retraction $\Phi : A \onto R$ such that

\begin{enumerate}
 \item [\rm(1)] Ker $\Phi$ is finitely generated.
 \item [\rm(2)] $A \otimes_R k(P) = k(P)^{[1]}$ for each minimal prime ideal
  $P$ of $R$.
 \item [\rm(3)] $A \otimes_R k(P)$ is geometrically integral over $k(P)$
  for each height one prime ideal $P$ of $R$.
\end{enumerate}
Then:
\begin{enumerate}
 \item [\rm(I)] $A \otimes_R k(P)$ is an $\mathbb{A}^1$-form over $k(P)$ 
 for each prime ideal $P$ of $R$.
 \item [\rm(II)] $A$ is finitely generated over $R$.
 \item [\rm(III)] If $R$ is an integral domain, then there exists a 
 finite birational extension $R'$ of $R$ and an invertible ideal $I$ of $R'$ 
 such that $A \otimes_R R' \cong Sym_{R'} ( I )$.
\end{enumerate}
\end{prop}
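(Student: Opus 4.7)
The plan is to first prove (III) for $R$ an integral domain, then deduce (II) in full generality by reducing to the domain case, and finally derive (I) from (III). For the reduction of (II), given a minimal prime $\mathfrak{q}$ of $R$, flatness of $A$ together with hypothesis~(2) makes $A/\mathfrak{q}A$ flat over $R/\mathfrak{q}$ and embeds it into the integral domain $A\otimes_R k(\mathfrak{q}) = k(\mathfrak{q})^{[1]}$, so $\mathfrak{q}A$ is prime in $A$; the decomposition $A = R \oplus Ker\,\Phi$ forces $\mathfrak{q}A\cap R = \mathfrak{q}$; and $A/\mathfrak{q}A$ inherits all the hypotheses over the domain $R/\mathfrak{q}$. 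Once (III) is established for $R/\mathfrak{q}$, Lemma~\ref{B-D_lem_1} applied to the finite extension $R/\mathfrak{q}\hookrightarrow R'$ yields that $A/\mathfrak{q}A$ is finitely generated over $R/\mathfrak{q}$, and Lemma~\ref{B-D_lem_2} then delivers~(II).

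For (III), assume $R$ is a Noetherian integral domain with quotient field $K$. First apply Lemma~\ref{L3} to produce $t\in R\setminus\{0\}$ and $F\in Ker\,\Phi$ with $A[1/t] = R[1/t][F]\cong R[1/t]^{[1]}$; the only problematic primes are the finitely many height-one primes $P_1,\ldots,P_s$ of $R$ containing $t$. Let $\bar R$ denote the integral closure of $R$ in $K$, which is a Krull domain by the Mori--Nagata theorem, and set $\bar A := A\otimes_R\bar R$. The retraction and the finite generation of its kernel pass to $\bar A$ over $\bar R$, and $\bar A\otimes_{\bar R} K = K^{[1]}$; at each height-one prime $\bar P$ of $\bar R$ the fibre $\bar A\otimes_{\bar R} k(\bar P)$ is an integral domain, since for $\bar P$ not containing $t$ the generic-fibre computation applies, while for $\bar P$ containing $t$ the contraction $\bar P\cap R$ is a height-one prime of $R$, so hypothesis~(3) together with stability of geometric integrality under residue-field extensions gives integrality. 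Theorem~\ref{Th2} applied to $\bar A$ over the Krull domain $\bar R$ then yields $\bar A\cong Sym_{\bar R}(\bar I)$ for some invertible ideal $\bar I$ of $\bar R$. To descend, pick generators $\bar b_1,\ldots,\bar b_n$ of $\bar I$ as an $\bar R$-module; the corresponding degree-one elements of $Sym_{\bar R}(\bar I)$ generate $\bar A$ as an $\bar R$-algebra and, when expressed in tensor form in $A\otimes_R\bar R$, involve only finitely many elements of $\bar R$. Let $R'\subseteq\bar R$ be the $R$-subalgebra generated by these finitely many integral elements, so that $R'$ is finite over $R$; setting $I:=R'\bar b_1+\cdots+R'\bar b_n$, flatness of $A$ over $R$ enables verification that $I$ is invertible in $R'$ and $A\otimes_R R'\cong Sym_{R'}(I)$, establishing~(III).

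With (III) in hand, (II) in the domain case is immediate from Lemma~\ref{B-D_lem_1} (the finite extension $R\hookrightarrow R'$ transports finite generation of $A\otimes_R R' = Sym_{R'}(I)$ back to finite generation of $A$), and (II) in general follows from the reduction of paragraph one. For~(I), given any prime $P$ of $R$, reduce to the domain case by replacing $(R,A)$ with $(R/\mathfrak{q}, A/\mathfrak{q}A)$ for a minimal prime $\mathfrak{q}\subseteq P$, and choose a prime $P'$ of $R'$ lying over $P$ (which exists by integrality). Since $R'$ is finite over $R$, $k(P')/k(P)$ is a finite algebraic extension and $\overline{k(P)} = \overline{k(P')}$; by~(III), $(A\otimes_R k(P))\otimes_{k(P)} k(P') = k(P')^{[1]}$, and base-changing further to $\overline{k(P)}$ gives $(A\otimes_R k(P))\otimes_{k(P)}\overline{k(P)} = \overline{k(P)}^{[1]}$, so $A\otimes_R k(P)$ is an $\mathbb{A}^1$-form over $k(P)$. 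The principal obstacle is the descent step in paragraph two -- from the possibly non-finite Mori--Nagata closure $\bar R$ to a finite birational $R'$ -- together with the parenthetical identification of $\bar P\cap R$ with a height-one prime of $R$ needed to invoke Theorem~\ref{Th2} over $\bar R$.
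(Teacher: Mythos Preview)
Your strategy of proving (III) first and deducing (II) and (I) from it is attractive in outline, but the two obstacles you flag at the end are genuine gaps, not mere technicalities, and your argument as written does not close them.

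\smallskip
\textbf{The height of $\bar P\cap R$.} For an arbitrary Noetherian domain $R$ the integral closure $\bar R$ need not be module-finite over $R$, and without universal catenarity the dimension formula can fail for the birational integral extension $R\hookrightarrow\bar R$. So there is no general reason why a height-one prime $\bar P$ of $\bar R$ containing $t$ must contract to a height-one prime of $R$; it may contract to a prime of larger height, at which hypothesis~(3) tells you nothing. The paper avoids this entirely by proving (I) first via induction on $\dim R$: one localizes at $P$, passes to the completion $\widehat R$ (so that its normalization $\widetilde R$ is \emph{finite} and local), and then uses only that a height-one prime $\widetilde P$ of $\widetilde R$ satisfies $\widetilde P\cap\widehat R\neq\widehat{\mathfrak m}$, which suffices by the inductive hypothesis on non-maximal primes. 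No claim about the exact height of the contraction is ever needed.

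\smallskip
\textbf{The descent to a finite $R'$.} Your construction of $R'$ adjoins only the $\bar R$-coefficients appearing in tensor expressions for the generators $\bar b_iT$ of $\bar A$. This gives $R'[IT]\subseteq A\otimes_R R'$, but for the reverse inclusion you must express an $R'$-algebra generating set of $A\otimes_R R'$ inside $R'[IT]$. Each $a\otimes 1$ with $a\in A$ lies in $\bar R[\bar IT]$, but with coefficients only in $\bar R$; to force them into $R'$ you need a \emph{finite} generating set $y_1,\dots,y_t$ for $A$ over $R$ and must adjoin the finitely many $\bar R$-coefficients occurring in the expansions of the $y_\ell\otimes 1$. (You also omit the elements $b_{ij}=a_i\alpha_j$ that witness invertibility of $I$ in $R'$.) Thus the descent step presupposes (II), while you propose to deduce (II) from (III); this is circular. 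The paper obtains (II) independently, again by passing to the completion so that the normalization becomes finite, and then combining Lemma~\ref{B-D_lem_1}, Lemma~\ref{L4} and Onoda's local-global result (Theorem~\ref{O_f.g.}).

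\smallskip
In short, the missing idea is completion: it converts the possibly non-finite normalization into a finite one and dissolves both obstacles simultaneously. With that step in place, the logical order (I) $\Rightarrow$ (II) $\Rightarrow$ (III) of the paper is essentially forced.
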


\begin{proof}
(I):
Note that, for any prime ideal $P$ of $R$, 
$A \otimes_R k(P) = A_P \otimes_{R_P} k(P)$. So, 
to prove the fibre condition (I), we replace $R$ by $R_P$ 
(and $A$ by $A_P$) and assume that $R$ is a local ring with maximal ideal 
$\m$. We prove that $A \otimes_R k(\m)$ is an $\mathbb{A}^1$-form over $k(\m)$ 
by induction on height $\m$, i.e., dim $R$.

\medskip

Case : \underline{dim $R$ =0}.

Trivial.

\medskip

Case : \underline{dim $R$ =1}.

Replacing $R$ by $R/P_0$ for some minimal prime ideal $P_0$, 
we may assume that $R$ is a Noetherian one-dimensional local 
integral domain with quotient field $K$. Note that condition 
(3) implies that $A \otimes_R k(\m)$ is geometrically integral over $k(\m)$.

\medskip

Let $\widetilde{R}$ be the normalisation of $R$ and let 
$\widetilde{A} = A \otimes_{R} \widetilde{R}$. 
Then, by Krull-Akizuki theorem, $\widetilde{R}$ is a Dedekind domain 
(\cite{Mat_RING}, p 85); and since $R$ is local, 
$\widetilde{R}$ is semilocal and hence a PID. 
Let $\widetilde{\m}_1, \widetilde{\m}_2, \dots, \widetilde{\m}_r$ be 
the maximal ideals of $\widetilde{R}$. Again, 
by Krull-Akizuki theorem (\cite{Mat_RING}, p 85), 
$k(\widetilde{\m}_i)$ is a finite algebraic extension of $k(\m)$. 
Clearly, the retraction $\Phi : A \onto R$ gives rise to a 
retraction $\widetilde{\Phi}: \widetilde{A} \onto \widetilde{R}$. 
From the split exact sequence 
$0 \longrightarrow Ker \ \Phi \longrightarrow A 
\longrightarrow R \longrightarrow 0$, 
it follows that $Ker \ \widetilde{\Phi} = Ker \ \Phi \otimes_R 
\widetilde{R} = Ker \ \Phi \otimes_A \widetilde{A} = 
(Ker \ \Phi) \widetilde{A}$ and hence 
$Ker \ \widetilde{\Phi}$ is finitely generated. 

\medskip

Thus, from (1), (2) and (3), we have:

\smallskip

(i) $Ker \ \widetilde{\Phi}$ is finitely generated.

(ii) $\widetilde{A} \otimes_{\widetilde{R}} K = K^{[1]}$.

(iii) $\widetilde{A} \otimes_{\widetilde{R}} k(\widetilde{\m}_i)$ is 
geometrically integral over $k(\widetilde{\m}_i)$ 
for every maximal ideal $\widetilde{\m}_i$ of $\widetilde{R}$.

\smallskip

Hence, by Corollary \ref{Cor1_Th2}, $\widetilde{A} = \widetilde{R}^{[1]}$. 
In particular, $\widetilde{A} \otimes_{\widetilde{R}} 
k(\widetilde{\m}_i)= k(\widetilde{\m}_i)^{[1]}$ 
for each maximal ideal $\widetilde{\m}_i$ of $\widetilde{R}$. 
This shows that $A \otimes_R k(\m)$ is an $\mathbb{A}^1$-form over $k(\m)$.

\medskip

Case : \underline{dim $R \ge 2$}.

By induction hypothesis we have that 
$A \otimes_R k(P)$ is an $\mathbb{A}^1$-form for every non-maximal 
prime ideal $P$ of $R$. 
Let $\widehat{R}$ denote the completion of $R$ and let 
$\widehat{A} = A \otimes_R \widehat{R}$. 
Then $\widehat{R}$ is a complete local ring with 
maximal ideal $\widehat{\m}$ and $\widehat{R}/\widehat{\m} \cong R/\m$. 
Since $R$ is Noetherian, $\widehat{R}$ is Noetherian and 
faithfully flat over $R$ and hence $\widehat{A}$ is faithfully flat 
over both $A$ and $\widehat{R}$. 
The retraction $\Phi : A \onto R$ gives rise to a retraction 
$\widehat{\Phi}: \widehat{A} \onto \widehat{R}$. 
Note that $Ker \ \widehat{\Phi} = (Ker \ \Phi) \widehat{A}$ is 
finitely generated. Now, for any non-maximal prime ideal 
$\widehat{P}$ of $\widehat{R}$, $\widehat{P} \cap R \neq \m$ and 
hence $\widehat{A} \otimes_{\widehat{R}} k(\widehat{P})$ is an 
$\mathbb{A}^1$-form over $k(\widehat{P})$.

\medskip

Replacing $R$ by $\widehat{R}$, we may assume $R$ to be a 
complete local Noetherian ring. Further, replacing $R$ by $R/P_0$, 
where $P_0$ is a minimal prime ideal of $R$, 
we may assume $R$ to be a complete, local, Noetherian domain 
with maximal ideal $\m$ and quotient field $K$ such that 

\smallskip

(a) $A \otimes_R K = K^{[1]}$.

(b) $A \otimes_R k(P)$ is an $\mathbb{A}^1$-form over $k(P)$ 
for each non-maximal prime ideal $P$ of $R$.

\medskip

Let $\widetilde{R}$ be the normalisation of $R$. Since $R$ is a 
complete local ring, $\widetilde{R}$ is a finite $R$-module 
(\cite{Mat_RING}, p 263) and hence is a Noetherian normal local domain. 
Let $\widetilde{A} = A \otimes_R \widetilde{R}$. 
As before, the retraction $\Phi: A \onto R$ induces a 
retraction $\widetilde{\Phi}: \widetilde{A} \onto \widetilde{R}$ 
with finitely generated kernel $(Ker \ \Phi) \widetilde{A}$. 
Now we have the following:

\medskip

$\widetilde{R}$ is a Noetherian normal local domain with 
quotient field $K$ and $\widetilde{A}$ is a faithfully flat 
$\widetilde{R}$-algebra such that

\smallskip

($1^{\prime}$) There is a retraction 
$\widetilde{\Phi} : \widetilde{A} \onto \widetilde{R}$ with 
finitely generated kernel.

($2^{\prime}$) $\widetilde{A} \otimes_{\widetilde{R}} K 
= A \otimes_R K = K^{[1]}$.

($3^{\prime}$) $\widetilde{A} \otimes_{\widetilde{R}} 
k(\widetilde{P})$ is an $\mathbb{A}^1$-form over $k(\widetilde{P})$ 
for each height one prime ideal $\widetilde{P}$ of 
$\widetilde{R}$ (since, for any height one prime ideal $\widetilde{P}$ 
of $\widetilde{R}$, $\widetilde{P} \cap R \neq \m$).

\medskip

By Theorem \ref{Th2}, $\widetilde{A} = \widetilde{R}^{[1]}$; 
in particular, $\widetilde{A} \otimes_{\widetilde{R}} k(\widetilde{\m}) 
= k(\widetilde{\m})^{[1]}$. This shows that 
$A \otimes_R k(\m)$ is an $\mathbb{A}^1$-form over $k(\m)$ 
and hence $A \otimes_R k(P)$ is an $\mathbb{A}^1$-form over $k(P)$ 
for every prime ideal $P$ of $R$.

\bigskip

(II):
We now show that $A$ is finitely generated over $R$. 
By Lemma \ref{B-D_lem_2}, it is enough to take $R$ to be an 
integral domain; by Theorem \ref{O_f.g.} and Lemma \ref{L3}, 
it is enough to assume $R$ to be local and, by Lemma \ref{L4}, 
it is enough to take $R$ to be complete. Thus we assume that 
$R$ is a Noetherian local complete integral domain. 
Let $\widetilde{R}$ be the normalisation of $R$. 
Then the proof of (I) shows that 
$A \otimes_R \widetilde{R} = {\widetilde{R}}^{[1]}$; 
in particular, $A \otimes_R \widetilde{R}$ is finitely generated 
over $\widetilde{R}$. Since $\widetilde{R}$ is a finite module over $R$, 
by Lemma \ref{B-D_lem_1}, $A$ is finitely generated over $R$. 

\bigskip

(III):
Now $R$ is given to be an integral domain. By (I),
$A \otimes_R k(P)$ is an $\mathbb{A}^1$-form over $k(P)$ 
for every prime ideal $P$ of $R$.

\medskip

Let $\widetilde{R}$ be the normalisation of $R$. 
Then $\widetilde{R}$ is a Krull domain (\cite{Mat_RING}, p 91). 
Let $\widetilde{A} = A \otimes_R \widetilde{R}$. 
As before, there is a retraction 
$\widetilde{\Phi}: \widetilde{A} \onto \widetilde{R}$ 
with finitely generated kernel. 
We now have the following:

\medskip

$\widetilde{R}$ is a Krull domain with quotient field $K$ and 
$\widetilde{A}$ is a faithfully flat $\widetilde{R}$-algebra such that

\smallskip

($1^{\prime \prime}$) There is a retraction 
$\widetilde{\Phi} : \widetilde{A} \onto \widetilde{R}$ 
with finitely generated kernel.

($2^{\prime \prime}$) $\widetilde{A} \otimes_{\widetilde{R}} K = K^{[1]}$.

($3^{\prime \prime}$) $\widetilde{A} \otimes_{\widetilde{R}} 
k(\widetilde{P})$ is an $\mathbb{A}^1$-form over $k(\widetilde{P})$ 
for each prime ideal $\widetilde{P}$ of $\widetilde{R}$ 
(since $k(\widetilde{P})$ is algebraic over $k(\widetilde{P} \cap R)$).

\medskip

Using Theorem \ref{Th2}, we get that 
$A \otimes_R \widetilde{R} = \widetilde{R}[\widetilde{I}T]$ for some 
invertible ideal $\widetilde{I}$ of $\widetilde{R}$. 
Let $\widetilde{I} = (a_1, a_2, \dots , a_n) \widetilde{R}$ and let 
$\alpha_1, \dots, \alpha_n \in {\widetilde{I}}^{-1}$ be such that
$a_1 \alpha_1 + \dots a_n \alpha_n =1$. Set
 $b_{ij}:=a_i \alpha_j (\in \widetilde{R})$, $1 \le i,j \le n$.
Let $a_p T = \underset{q=1} {\overset{s_p} {\sum}} {u_{pq} \otimes c_{pq}}$ 
where $c_{pq} \in \widetilde{R}$ and $u_{pq} \in A$.

\medskip

By (II), $A$ is finitely generated; let $A = R[y_1, y_2, \dots, y_t]$ 
where each $y_{\ell} \in Ker \ \Phi$. Then 
\[
y_{\ell} \otimes 1 = \underset{m=0} {\overset{r_{\ell}} {\sum}} 
~ \underset{m_1 +m_2 + \dots + m_n =m}
  \sum {d_{\ell~ m_1 m_2 \dots m_n}~ {a_1}^{m_1}  
	{a_2}^{m_2} \dots {a_n}^{m_n}} ~ {T^m} 
\] 
 for some $d_{\ell ~ m_1 m_2 \dots m_n} \in \widetilde{R}$. 

\medskip

Now, let $R'$ be the $R$-subalgebra of $\widetilde{R}$ 
generated by the elements $a_1, a_2, \dots , a_n$; 
$b_{ij}$ where $1 \le i,j \le n$; 
$c_{pq}$ where $1 \le q \le s_p$, $1 \le p \le n$; 
and $d_{\ell ~ m_1 m_2 \dots m_n}$ 
where $m_1 +m_2 + \dots +m_n =m$, 
$0 \le m \le r_{\ell}$, $1 \le \ell \le t$. 
Let $I$ be the ideal $(a_1, a_2, \dots , a_n)R'$. 
Then $R'$ is a finite birational extension of $R$ and 
$I$ is an invertible ideal of $R'$.

\medskip

Since $A$ is faithfully flat over $R$, we have 
$A \otimes_R R' \subseteq A \otimes_R \widetilde{R} 
\subseteq A \otimes_R K = K[T]$. 
Now considering $A \otimes_R R'$ and $R'[IT]$ as subrings of 
$A \otimes_R K$, it is easy to see that $A \otimes_R R' = {R'}[IT]$.

This completes the proof.
\end{proof}

\begin{rem} \label{Rem_2}
The above proof shows that in the statement of Proposition \ref{Th3}, 
it is enough to assume in (2) that the generic fibres are 
$\mathbb{A}^1$-forms. 
(In the proof take $\widetilde{R}$ to be the integral closure of $R$ in 
$L$ where $L$ is a finite extension of $K$ such that 
$A \otimes_R L = L^{[1]}$.)
\end{rem}

We now prove Theorem A.

\begin{thm} \label{Th4}
Let $ \mathbb{Q} \hookrightarrow R$ be a Noetherian ring and 
$A$ be a flat $R$-algebra with a retraction $\Phi : A \onto R$ such that

\begin{enumerate}
 \item [\rm(1)]Ker $\Phi$ is finitely generated.
 \item [\rm(2)] $A \otimes_R k(P) = k(P)^{[1]}$ at each minimal 
 prime ideal $P$ of $R$.
 \item [\rm(3)] $A \otimes_R k(P)$ is an integral domain 
 at each height one prime ideal $P$ of $R$.
\end{enumerate}

Then:

\begin{enumerate}
 \item [\rm(I)] $A$ is an $\mathbb{A}^1$-fibration over $R$.
 \item [\rm(II)] If $R$ is an integral domain, then there exists a 
 finite birational extension $R'$ of $R$ and an invertible ideal 
 $I$ of $R'$ such that $A \otimes_R R' \cong Sym_{R'} ( I )$.
 \item [\rm(III)] If $R_{red}$ is seminormal, then $A \cong Sym_R (I)$ 
 for some finitely generated rank one projective $R$-module $I$.
\end{enumerate}
\end{thm}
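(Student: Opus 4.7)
My plan is to deduce Theorem~A from Proposition~A (Proposition~\ref{Th3}) by upgrading the integrality hypothesis on height-one fibres to geometric integrality using the characteristic-zero hypothesis; Proposition~A then gives $\mathbb{A}^1$-form fibres at every prime of $R$, and Kambayashi's theorem (in characteristic zero, every $\mathbb{A}^1$-form over a perfect field is $\mathbb{A}^1$) promotes this to an $\mathbb{A}^1$-fibration, proving (I). Part (II) will then be immediate from Proposition~\ref{Th3}(III). For part (III), (I) combined with a known structure theorem for $\mathbb{A}^1$-fibrations over seminormal Noetherian rings yields $A \cong Sym_R(I)$ for a finitely generated rank-one projective $R$-module $I$.

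For the geometric-integrality upgrade, I would fix a height-one prime $P$ of $R$. After localizing at $P$ and passing to a quotient by an appropriate minimal prime, we may assume $R$ is a one-dimensional local Noetherian domain with residue field $k=k(P)$. Completing $R$ and once more quotienting by a minimal prime of $\widehat{R}$, we may further assume $R$ is a complete one-dimensional local Noetherian domain. Its normalization $V$ is then a complete DVR (finite over $R$ by Nagata), with uniformizer $\pi$ and residue field $k(V)$ finite and separable (characteristic zero) over $k$. The base change $A_V := A \otimes_R V$ is flat over $V$, inherits a retraction $\Phi_V$ with finitely generated kernel, has generic fibre $(\mathrm{Frac}\, V)^{[1]}$, and closed fibre $(A/\m A) \otimes_k k(V)$. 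Provided this closed fibre is a domain, Corollary~\ref{Cor_RS_DO-ver} applied over the DVR $V$ forces $A_V = V^{[1]}$; hence $(A/\m A) \otimes_k k(V) = k(V)^{[1]}$, and tensoring with $\overline{k}$ yields $(A/\m A) \otimes_k \overline{k} = \overline{k}^{[1]}$, so $A/\m A$ is an $\mathbb{A}^1$-form and in particular geometrically integral. If the closed fibre of $A_V$ fails to be a domain, one further quotients $A_V$ by a suitable minimal prime before applying Corollary~\ref{Cor_RS_DO-ver}, and then descends.

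The main obstacle is exactly this geometric-integrality upgrade: the retraction only ensures that $k(P)$ is algebraically closed in $A \otimes_R k(P)$, not in its fraction field (as exemplified by $\mathbb{Q}[x,y]/(x^2+y^2)$, which is a $\mathbb{Q}$-algebra domain with $\mathbb{Q}$-retraction and $\mathbb{Q}$ algebraically closed in it, yet not geometrically integral), so the upgrade cannot be purely formal. What should make it work here is the interplay between the characteristic-zero hypothesis, the polynomial generic fibre, the finitely generated kernel of $\Phi$, and the Russell--Sathaye criterion over the complete DVR $V$, which together must exclude such pathological reducible closed fibres.
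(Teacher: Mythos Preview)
Your overall architecture matches the paper's: reduce (I) to showing that each height-one fibre is $k(P)^{[1]}$, which in characteristic zero follows from showing it is an $\mathbb{A}^1$-form, which in turn follows if $A_V = V^{[1]}$ for a suitable DVR $V$ dominating $R_P$. Parts (II) and (III) are handled exactly as you say. The paper does not pass to the completion; it goes directly via Krull--Akizuki to a DVR $\widetilde{R}\subset K$ with residue field $\widetilde{k}$ finite separable over $k$, but this difference is inessential.

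The genuine gap is precisely where you flag it. Over the DVR $V$ you cannot invoke Corollary~\ref{Cor_RS_DO-ver} (or Proposition~\ref{Th1}) because you do not know $\pi$ is prime in $A_V$: the closed fibre $A_V/\pi A_V \cong D\otimes_k k(V)$ with $D=A\otimes_R k$ is only known to be \emph{reduced} (since $k(V)/k$ is separable), not a domain. Your proposed fix---``quotient $A_V$ by a suitable minimal prime and then descend''---does not work: any minimal prime of $A_V$ over $\pi A_V$ contains $\pi$, so after quotienting you have killed the uniformizer and lost the generic fibre entirely, leaving nothing to which Russell--Sathaye applies; and there is no evident way to descend from a component of $D\otimes_k k(V)$ back to $D$.

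The paper bypasses the primality of $\pi$ altogether. Using Lemma~\ref{L2} it chooses $x\in\operatorname{Ker}\widetilde{\Phi}\setminus\pi\widetilde{A}$ with $\widetilde{A}[1/\pi]=K[x]$, sets $B=\widetilde{R}[x]$, and proves directly that $\pi\widetilde{A}\cap B=\pi B$ (whence $\widetilde{A}=B$ by Lemma~\ref{L0a}). The key idea you are missing is this: the retraction $D\to k$ gives a maximal ideal $\m_0$ of $D$ with residue field $k$, so $D_{\m_0}\otimes_k\widetilde{k}$ is a \emph{local} ring (its residue ring is $k\otimes_k\widetilde{k}=\widetilde{k}$); since $D$ is a domain, $D\hookrightarrow D_{\m_0}$, hence $\widetilde{A}/\pi\widetilde{A}=D\otimes_k\widetilde{k}$ embeds in this local ring, forcing all minimal primes of $\widetilde{A}/\pi\widetilde{A}$ to lie under a single prime. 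A short comaximality argument then shows that the contractions $P_i\cap B$ are all equal; if they were maximal one would get $x\in\pi\widetilde{A}$ via the induced retraction $B/Q\twoheadrightarrow\widetilde{k}$, contradicting the choice of $x$. This is the substantive step that replaces your unresolved ``interplay'' paragraph.
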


\begin{proof}

(I):
By Proposition \ref{Th3}, it is enough to show that 
$A \otimes_R k(P) = k(P)^{[1]}$ 
for each prime ideal $P$ in $R$ of height one.

\medskip

Fix a prime ideal $P$ in $R$ of height one. Replacing $R$ by $R_P$, 
we assume that $R$ is a one-dimensional Noetherian local ring with 
maximal ideal $\m$ and residue field $k$. 
Moreover, replacing $R$ by $R/P_0$ for some minimal prime ideal $P_0$, 
we may further assume that $R$ is an integral domain with quotient field $K$. 
We show that $A \otimes_R k = k^{[1]}$.

\medskip

Note that $k$ is a field of characteristic $0$. 
By Krull-Akizuki theorem, there exists a discrete valuation ring
 $(\widetilde{R}, \pi, \widetilde{k})$ such that 
$R \subset \widetilde{R} \subset K$ 
and $\widetilde{k}$ is a finite separable extension of $k$. 
Let $\widetilde{A} = A \otimes_R \widetilde{R}$. 
Since separable $\mathbb{A}^1$-forms are $\mathbb{A}^1$, 
to show that $A \otimes_R k = k^{[1]}$, it is enough to show that 
$\widetilde{A} /\pi \widetilde{A} 
(= A \otimes_R \widetilde{k})= \widetilde{k}^{[1]}$ and 
hence enough to show that $\widetilde{A} = \widetilde{R}^{[1]}$.

\medskip

Now, the retraction $\Phi : A \onto R$ with finitely generated kernel 
induces a retraction $\widetilde{\Phi}: \widetilde{A} \onto 
\widetilde{R}$ with finitely generated kernel. 
Also $\widetilde{A}[1/\pi] = K^{[1]}$. 
Using Lemma \ref{L2}, 
we get $x \in Ker \ \widetilde{\Phi} \backslash \pi \widetilde{A}$ 
such that $\widetilde{A}[1/\pi] = K[x]$.

\medskip

Let $B = \widetilde{R}[x] \subset \widetilde{A}$. 
We will show that $\widetilde{A} = B$. 
Since $\pi$ is a non-zero divisor and since 
$ \widetilde{A}_{\pi} = B_{\pi}$, by Lemma \ref{L0a}, 
it suffices to show that $\pi \widetilde{A} \cap B = \pi B$.

\medskip

Let $D = A \otimes_ R k$. Then 
$ \widetilde{A}/ \pi \widetilde{A} = \widetilde{A} \otimes_{\widetilde{R}} 
\widetilde{k} = (A \otimes_R k) \otimes_k \widetilde{k} = 
D \otimes_k \widetilde{k}$. By hypothesis, $D$ is an integral domain and 
hence, as $\widetilde{k}|_k$ is separable, 
$\widetilde{A}/\pi \widetilde{A} = D \otimes_k \widetilde{k}$ is a 
reduced ring. Note that $\widetilde{A}/ \pi \widetilde{A}$ is a finite 
flat module over $D$ and hence $\widetilde{A}$ has only 
finitely many minimal prime ideals $P_1, P_2, \dots , P_n$ 
containing $\pi \widetilde{A}$. 
To show that $\pi \widetilde{A} \cap B = \pi B$, 
it is enough to show that $P_i \cap B = \pi B$ for some $i$.

\medskip

Suppose, if possible, that $P_i \cap B \ne \pi B$ for all $i$. 
Let $P_i \cap B = Q_i$. Then $ht \ Q_i > 1$, i.e., $Q_i$s 
are maximal ideals of $B$ (since dim $B$ $= 2$). 
Let $t$ be the number of distinct ideals in the family 
$\{ Q_1, Q_2, \dots, Q_n \}$. By reindexing, if necessary, 
we assume that $Q_1, Q_2, \dots, Q_t$ are all distinct. 
Let $I_i = \underset{P_{j} \cap B = Q_i} {\overset{} {\cap}} P_{j}$. 
Since $Q_i$s are pairwise comaximal, $I_i$s are pairwise comaximal. 
Thus $\widetilde{A}/ \pi \widetilde{A} = 
\widetilde{A}/ I_1 \times \widetilde{A}/ I_2 \times \dots \times 
\widetilde{A}/ I_t$.

Since $D = A \otimes_R k$, the retraction $\Phi: A \onto R$ 
induces a retraction $\Phi_k : D \onto k$. 
Let $\m_0$ be a maximal ideal of $D$ such that $D/\m_0 = k$. 
Note that $D \hookrightarrow D_{\m_0}$ and hence, due to flatness, 
$D \otimes_k \widetilde{k} \hookrightarrow D_{\m_0} \otimes_k \widetilde{k}$. 
Since $D_{\m_0}$ is local and since $\widetilde{k}|_k$ is a finite 
extension, $D_{\m_0} \otimes_k \widetilde{k}$ is also local with 
maximal ideal $\m_0 (D_{\m_0} \otimes_k \widetilde{k})$ and 
residue field $\widetilde{k}$. As the local ring 
$D_{\m_0} \otimes_k \widetilde{k}$ is a localisation of 
$D \otimes_k \widetilde{k} = \widetilde{A}/ \pi \widetilde{A}$, 
it follows that there exists a prime ideal $\p$ of 
$\widetilde{A}/ \pi \widetilde{A}$ such that 
$D_{\m_0} \otimes_k \widetilde{k} = (\widetilde{A}/ \pi \widetilde{A})_{\p}$.

\smallskip

Note that $\widetilde{A}/ \pi \widetilde{A} = 
D \otimes_k \widetilde{k} \hookrightarrow D_{\m_0} \otimes_k 
\widetilde{k} = (\widetilde{A}/ \pi \widetilde{A})_{\p}$. 
As the map $\widetilde{A}/ \pi \widetilde{A} 
\longrightarrow (\widetilde{A}/ \pi \widetilde{A})_{\p}$ is one-one, 
it follows that the zero divisors of 
$\widetilde{A}/ \pi \widetilde{A}$ are contained in $\p$. 
Consequently, $\overline{P_i} \subset \p$ where $\overline{P_i}$ is 
the image of $P_i$ in $\widetilde{A}/ \pi \widetilde{A}$. 
But this would imply that the local ring 
$(\widetilde{A}/ \pi \widetilde{A})_{\p}$ is a product of 
$t$ rings which is possible only if $t=1$. 
So $P_i \cap B = Q$ for all $i$, which implies that 
$\pi \widetilde{A} \cap B = Q$. Note that the retraction 
$\widetilde{\Phi} : \widetilde{A} \onto \widetilde{R}$ induces a 
retraction $\widetilde{\Phi}_{\pi}: \widetilde{A}/ \pi 
\widetilde{A} \onto \widetilde{R}/ \pi \widetilde{R}$. 
Now since $\pi \widetilde{A} \cap B = Q$, the retraction 
$\widetilde{\Phi}_{\pi}$ induces a retraction 
$\widetilde{\Phi}_{\pi}^{\prime} : B/Q \onto \widetilde{k}$. 
But $Q$ is a maximal ideal of $B$, i.e., $B/Q$ is a field. 
Hence $\widetilde{\Phi}_{\pi}^{\prime}$ is an isomorphism. 
As $x \in Ker \ \widetilde{\Phi}$, it then follows that 
$x \in Q \subset \pi \widetilde{A}$ and hence 
$x \in \pi \widetilde{A}$, a contradiction.

\smallskip 

Thus $\pi \widetilde{A} \cap \widetilde{R}[x] = \pi \widetilde{R}[x]$ and 
hence $\widetilde{A} = \widetilde{R}^{[1]}$ showing that 
$A \otimes_R k = k^{[1]}$.

\bigskip

(II): Follows from (III) of Proposition \ref{Th3}.

\bigskip

(III): Follows from (I) and the result (\cite{Asanuma_fibre_ring}, 3.4) of 
Asanuma, using results of Hamann (\cite{Haman_Invariance}, 2.6 or 2.8) 
and Swan (\cite{On_Sem}, 6.1); also see (\cite{Greither_Seminormal}).
\end{proof}

\begin{rem}
Examples from existing literature would show that the hypotheses in our 
results cannot be relaxed. For instance, the hypothesis that 
``$Ker \ \Phi$ is finitely generated'' is necessary in all the results 
as can be seen from the example: 
Let $(R, \pi)$ be a DVR and $A=R[X, X/\pi, X/\pi^2, \dots, X/\pi^n, \dots]$.

\medskip

An example of Eakin-Silver (\cite{E-S}, 3.15) shows that the hypothesis 
``$A$ has a retraction to $R$'' is necessary in Proposition \ref{Th0}. 
Even if $R$ is local and factorial and $A$ Noetherian, 
the hypothesis ``$A$ has a retraction to $R$'' would still be 
necessary in Theorem \ref{Th2} even to conclude that 
$A$ is finitely generated as has been shown recently in 
\cite{LOC_A1_CODIM1}. Even if $A$ is finitely generated, 
the hypothesis ``$A$ has retraction to $R$'' would still be necessary 
in Theorem \ref{Th2} to conclude that $A$ is a 
symmetric algebra (consider $R=k[[t_1, t_2]]$ where $k$ is any field and 
$A=R[X,Y]/(t_1 X + t_2 Y -1)$).

\medskip

The following example of Yanik (\cite{Yanik}, 4.1) shows the necessity 
of seminormality hypothesis in the passage from (I) to (III) 
in Theorem \ref{Th4}: Let $k$ be a field of characteristic zero, 
$R=k[[t^2, t^3]]$ and $A=R[X, tX^2] + (t^2, t^3)R[X]$; 
also see \cite{Greither_Seminormal}.

\medskip

For other examples (e.g.,  the necessity of 
``geometrically integral'' in Proposition \ref{Th3}, 
the necessity of ``$\mathbb Q \hookrightarrow R$'' in Theorem \ref{Th4} and 
the necessity of ``flatness''), see \cite{BD_A1FIBSUBALG}, Section 4.
\end{rem}

{\noindent \bf Acknowledgement.} The authors thank N. Onoda 
for pointing out a gap in argument in an earlier draft of the 
paper and to S.M. Bhatwadekar and Neena Gupta for useful suggestions.
The first author also thanks the National Board for Higher Mathematics,
India, for financial support.

\bibliography{reference1}

\providecommand{\bysame}{\leavevmode\hbox to3em{\hrulefill}\thinspace}
\providecommand{\MR}{\relax\ifhmode\unskip\space\fi MR }
\providecommand{\MRhref}[2]{%
  \href{http://www.ams.org/mathscinet-getitem?mr=#1}{#2}
}
\providecommand{\href}[2]{#2}
\begin{thebibliography}{Ham75}

\bibitem[Asa87]{Asanuma_fibre_ring}
Teruo Asanuma, \emph{Polynomial fibre rings of algebras over {N}oetherian
  rings}, Invent. Math. \textbf{87} (1987), no.~1, 101--127.

\bibitem[BD95]{BD_A1FIBSUBALG}
S.~M. Bhatwadekar and Amartya~K. Dutta, \emph{On {$\mathbb{A}^1$}-fibrations of
  subalgebras of polynomial algebras}, Compositio Math. \textbf{95} (1995),
  no.~3, 263--285.

\bibitem[BDO]{LOC_A1_CODIM1}
S.~M. Bhatwadekar, Amartya~K. Dutta, and Nobuharu Onoda, \emph{On algebras
  which are locally {$\mathbb{A}^1$} in codimension-one}, Preprint.

\bibitem[DO07]{DO_CODIM1}
Amartya~K. Dutta and Nobuharu Onoda, \emph{Some results on codimension-one
  {$\mathbb{A}^1$}-fibrations}, J. Algebra \textbf{313} (2007), no.~2,
  905--921.

\bibitem[Dut95]{D_MOR}
Amartya~K. Dutta, \emph{On {$\mathbb{A}^1$}-bundles of affine morphisms}, J.
  Math. Kyoto Univ. \textbf{35} (1995), no.~3, 377--385.

\bibitem[ES72]{E-S}
Paul Eakin and James Silver, \emph{Rings which are almost polynomial rings},
  Trans. Amer. Math. Soc. \textbf{174} (1972), 425--449.

\bibitem[Gre86]{Greither_Seminormal}
Cornelius Greither, \emph{A note on seminormal rings and {${\bf A}\sp
  1$}-fibrations}, J. Algebra \textbf{99} (1986), no.~2, 304--309.

\bibitem[Ham75]{Haman_Invariance}
Eloise Hamann, \emph{On the {$R$}-invariance of {$R[x]$}}, J. Algebra
  \textbf{35} (1975), 1--16.

\bibitem[Mat89]{Mat_RING}
Hideyuki Matsumura, \emph{Commutative ring theory}, second ed., Cambridge
  Studies in Advanced Mathematics, vol.~8, Cambridge University Press,
  Cambridge, 1989, Translated from the Japanese by M. Reid.

\bibitem[Ono84]{O_Subring}
Nobuharu Onoda, \emph{Subrings of finitely generated rings over a
  pseudogeometric ring}, Japan. J. Math. (N.S.) \textbf{10} (1984), no.~1,
  29--53.

\bibitem[RS79]{RS_FIND}
Peter Russell and Avinash Sathaye, \emph{On finding and cancelling variables in
  {$k[X,\,Y,\,Z]$}}, J. Algebra \textbf{57} (1979), no.~1, 151--166.

\bibitem[Swa80]{On_Sem}
Richard~G. Swan, \emph{On seminormality}, J. Algebra \textbf{67} (1980), no.~1,
  210--229.

\bibitem[Yan81]{Yanik}
Joe Yanik, \emph{Projective algebras}, J. Pure Appl. Algebra \textbf{21}
  (1981), no.~3, 339--358.

\end{thebibliography}
\bibliographystyle{amsalpha}

\end{document}